\def\pd#1#2{\frac{\partial#1}{\partial#2}}
\newcommand{\cb}{\color{blue}}
\theoremstyle{plain}
\newtheorem{theorem}{Theorem}[section]
\newtheorem{lemma}{Lemma}[section]
\newtheorem{proposition}{Proposition}[section]
\theoremstyle{definition}
\newtheorem{remark}{Remark}[section]
\newtheorem*{assumption}{Assumptions}
\def\D{{\cal D}}
\def\Om {\Omega }
\def\R {{\mathbb R}}
\def\eps {\varepsilon }
\def\Ker {\text {Ker }}
\numberwithin{equation}{section}
\begin{document}
\allowdisplaybreaks
\title{\large\bf Global boundedness, hair trigger effect, and pattern formation driven by the
parametrization of a nonlocal Fisher-KPP problem}
\author{
{\rm Jing Li\footnote{E-mail address: matlj@163.com}}\\
{\it\small  College of Science, Minzu University of China,
  Beijing, 100081, P.R. China}\\
{\rm Li Chen\footnote{E-mail address: chen@math.uni-mannheim.de}}\\
{\it\small Lehrstuhl f\"ur Mathematik IV, Universit\"at Mannheim, 68131, Mannheim, Germany }\\
{\rm Christina Surulescu\footnote{ E-mail address: surulescu@mathematik.uni-kl.de}}\\
{\it\small Felix-Klein-Zentrum f\"ur Mathematik, Paul-Ehrlich-Stra\ss e 31, 67663 Kaiserslautern, Germany }
}

\date{}

\maketitle

\begin{abstract}
The global boundedness and the hair trigger effect of solutions for the nonlinear nonlocal reaction-diffusion equation
\begin{align*}
\pd ut=\Delta u+\mu u^\alpha(1-\kappa J*u^\beta),\quad\hbox{in} \;\mathbb R^N\times(0,\infty),\; N\geq 1
\end{align*}
with $\alpha\geq1$, $\beta,\mu,\kappa>0$ and $u(x,0)=u_0(x)$ are investigated. Under appropriate assumptions on $J$, it is proved that for any  nonnegative and bounded initial condition, if $\alpha\in[1,\alpha^*)$ with
$\alpha^*=1+\beta$ for $N=1,2$ and $\alpha^*=1+\frac{2\beta}{N}$ for $N>2$, then
the problem has a global bounded classical solution.  Under further assumptions on the initial datum, the solutions satisfying $0\leq u(x,t)\leq\kappa^{-\frac1\beta}$ for any $(x,t)\in\mathbb R^N\times[0,+\infty)$ are shown to converge to $\kappa^{-\frac1\beta}$ uniformly on any compact subset of $\mathbb R^N$, which is known as the hair trigger effect. 1D numerical simulations of the above nonlocal reaction-diffusion equation are performed and the effect of several combinations of parameters and convolution kernels on the solution behavior is investigated.
The results motivate a discussion about some conjectures arising from this model and further issues to be studied in this context. A formal deduction of the model from a mesoscopic formulation is provided as well.
\end{abstract}

\vskip5mm

{\small
{Mathematical Subject Classifications}: 35K65, 35K40.

{Keywords}: Global boundedness; hair trigger effect; pattern formation; nonlocal reaction-diffusion equation.
}
\vskip5mm

\section{Introduction}

In this work we study the nonlinear nonlocal reaction-diffusion equation\footnote{Here we consider the equation to be already nondimensionalized.}
\begin{align}
&\pd ut=\Delta u+\mu u^\alpha(1-\kappa J*u^\beta),&&(x,t)\in \mathbb R^N\times(0,\infty),
\label{eq:1-1}
\\
&u(x,0)=u_0(x),&&x\in\mathbb R^N,\label{eq:1-2}
\end{align}
where $\alpha\geq1$, $\beta,\mu,\kappa>0$, $N\geq1$, $J(x)$ is a competition
kernel with
\begin{align}
0\leq J\in L^1(\mathbb R^N),\quad\int_{\mathbb R^N} J(x)dx=1,\quad \hbox{and}\; \inf_{B(0,\delta_0)}J>\eta \quad\hbox{for some}\; \delta_0>0, \eta>0,\label{eq:1-3}
\end{align}
where $B(0,\delta_0)=\{x\in\mathbb R^N:|x_i|\leq\delta_0,i=1,2,\cdots,N\}$ and
$$J*u^\beta(x,t)=\int_{\mathbb R^N}J(x-y)u^\beta(y,t)dy.$$

This problem can be seen to characterize the evolution of a population of density $u$, whose individuals are moving by diffusion and interaction. Their interaction modus determines the faith of the population with respect to growth or decay: the reaction term describes the joint influence of a nonlinear growth accounting for a weak Allee effect and of concurrence for available resources (prevention of overcrowding). The latter takes a nonlocal form; several individuals interact in a space/phenotypic trait/etc. domain, thereby sampling all occupancy information therein.
Such problems arise e.g., when modeling emergence and evolution of a biological
species \cite{4,ref9,ref10,ref22,ref32,1}. Thereby the respective population is structured by a phenotypical trait and its individuals infer two essential interactions: mutation and selection. In this context $u(x,t)$ represents the density of a population having phenotype $x$ at time $t$. The mutation process, which acts as a diffusion operator on the trait space, is modeled by a classical diffusion operator, whereas the selection process is described by the nonlocal term $u^\alpha(1-J*u^\beta)$. Similar nonlocal reaction terms also
occur in describing natural selection of cancer cells, which leads to the emergence of therapy-resistent clones \cite{20,21}.

Equation \eqref{eq:1-1} is a particular case of a more general monospecies setting which can be deduced in various ways. The perhaps simplest one (see e.g. \cite{britton89}) starts from the local reaction-diffusion equation
\begin{equation*}
u_t=D\Delta u+r(u)u
\end{equation*}
and lets the growth rate $r(u)$ depend not only on the population density at a certain location $x$, but also at the other points in some domain of interest (which can be the whole space):
\begin{equation*}
r(u)=ru^{\alpha -1}(1-J*u^\beta ).
\end{equation*}

Indeed, a spatially heterogeneous population can exceed locally its carrying capacity, which in the usual Fisher equation (logistic growth) would simply lead to decay. By letting, however, the population use resources/signals available at more or less proximal sites such decay is not necessary to happen. It is known, for instance,  that cells in a tissue are able to communicate with each other by way of thin protrusions (lamellipodia, filopodia, cytonemes, nanotubes) which can reach at long distances with respect to the cell size, see \cite{GM17,SSM17} and references therein. Moreover, clustering together or organizing in groups may even provide advantages, depending on the competition strength; this can apply to cells \cite{biofilms}, but also to animals cooperating for hunt, associating in schools or swarms, or simply undergoing sexual reproduction (case $\alpha =2$).

Another way to obtain a PDE of the type \eqref{eq:1-1}, with or without diffusion, is by relying on individual-based formulations involving stochastic processes and performing some appropriate upscaling, see e.g. \cite{CFM06}; we also refer to \cite{Ichikawa} for an instance of deducing a reaction-diffusion system with nonlocality in the reaction terms by using master equations and mean field limits.

Yet another approach \cite{BBNS12} uses kinetic transport equations to derive by a hydrodynamic limit PDEs for which an equation of the form
\eqref{eq:1-1} is a particular case. We shortly illustrate in the Appendix its concrete application. % for $\beta =1$.

%The reaction term $u^\alpha(1-J*u^\beta)$ consists of the nonlinear reproduction and the available resources.
%The nonlocal consumption of the resources $J*u^\beta(x,t)$ describes the phenomenon that consumption at the space point $x$
%is determined by a power function of the individuals located in some area around this point, where $J$ represents the
%probability density function that describes the distribution of individuals. More than the linear case $\alpha=\beta=1$,
%this equation takes into account the weak Allee effect and the overcrowding preventing effect by introducing $\alpha>1$ and $\beta>1$ respectively. In population dynamics, $\alpha=1$ corresponds to asexual reproduction and $\alpha=2$ to sexual reproduction, $\beta=1$ corresponding to intra-specific competition and $\beta>1$ to inter-specific competition.
%Such type of nonlocal model has also been introduced to capture the evolution of a population structured by a phenotypical trait(see\cite{ref9,ref10,ref22,ref32}), where $u(x,t)$ represents the density of a population at the phenotypical trait $x$ at time $t$, which is submitted to two essential interactions: mutation and selection. Here, the mutation process, which acts as a diffusion operator on the trait space, is modelled by a classical diffusion operator whereas the selection process is modelled by the nonlocal term $u^\alpha(1-J*u^\beta)$. Similar nonlocal structure in the reaction term
%appears also in describing natural selection of cancer cells which lead to the emergence of clone resistant to treatment \cite{20,21,30}.

When the interaction kernel $J$ in \eqref{eq:1-1} is replaced by the Dirac delta function the
so called generalized Fisher-KPP equation is obtained as a local reaction-diffusion equation. In \cite{ref99} and \cite{ref18}, the Fisher-KPP equation
$$\frac{\partial u}{\partial t}=\Delta u+u(1-u)$$
was introduced to model the spreading of some advantageous gene in a population. It is well known that any solution $u(x,t)$ with a nonnegative and nontrivial initial data, tends to $1$ as $t\to\infty$, locally uniformly in $x\in\mathbb R^N$. This is referred to as the \textit{hair trigger effect} \cite{ref3}.
When accounting for a weak Allee effect the above equation becomes
$$\frac{\partial u}{\partial t}=\Delta u+u^\alpha(1-u)$$
for $\alpha>1$. As stated in \cite{Lou}, an immediate difficulty arises when trying to apply standard comparison methods, since the equilibrium $u=0$ is degenerate. It turns out that the dynamics of solutions is much more complicated and interesting than that for $\alpha=1$. In \cite{ref3}, Aronson and Weinberger showed that for $N\geq2$, the hair trigger effect remains valid as long as $1\leq\alpha\leq1+\frac{2}{N}$, whereas some small initial data may lead to extinction, or quenching, when $\alpha>1+\frac{2}{N}$.

For $J(x)=1$, which corresponds to the situation of blind competition,
with general $\alpha\geq1$ and $\beta\geq1$, the problem has been studied
in \cite{BCL,BC} in terms of the existence of solutions both in bounded
and unbounded domains, respectively. Moreover, from the analysis of \cite{Covi}, denote $\lambda_1$ the first eigenvalue of the operator $\Delta+\mu$ with Neumann boundary condition and let $\phi_1$ be a positive eigenfunction associated with $\lambda_1$, for any $\beta\geq1$, the positive solution of
$$\frac{\partial u}{\partial t}=\Delta u+u\left(\mu-\int_{\Omega}|u^\beta(t,y)|dy\right)$$
in bounded domain converges uniformly to $\left(\frac{-\lambda_1}{\int_\Omega |\phi_1^\beta(y)|dy}\right)^{\frac1\beta}$ if $\lambda_1<0$, while for
$\lambda_1\geq0$, there is no positive stationary solution and
$u(x,t)\to0$ as $t\to\infty$.

For $J$ satisfying \eqref{eq:1-3}, the consumption of resources at the space/phenotypic trait point $x$ depends on individuals located in some area around this point. As stated in \cite{ref2}, introducing  nonlocal intraspecific competition for resources changes the properties of solutions of this equation. Some progress has recently been attained in this direction for the so called nonlocal Fisher-KPP equation
\begin{align}
\frac{\partial u}{\partial t}=\Delta u+\mu u(1-J*u),\label{eq:1-4}
 \end{align}
for which $u=1$ is a stationary solution. The latter is stable in the case of the local equation, but it can lose its stability for the nonlocal one. If it becomes unstable, then a periodic in space stationary solution bifurcates from it \cite{ref13,ref19,ref21}. This phenomenon is observed in the study of travelling wave solutions. If the Fourier transform of $J$ is everywhere positive or if $\mu$ is small enough, then it is known that travelling waves necessarily connect $0$ to $1$ (see \cite{7,4,6,15,5}), while if $\mu$ is large, then $u=1$ can indeed become unstable and Turing patterns appear \cite{5,99}. Similar results were obtained for the bistable case
\begin{align}
\frac{\partial u}{\partial t}=\Delta u+\mu u^2(1-J*u)-du,
\end{align}
in our previous work \cite{LC}, where $-du$ is the mortality term and $d$ is the death rate. For the long time behavior of solutions, in a recent work \cite{Pou}, it was proved that for the nonlocal Fisher-KPP equation \eqref{eq:1-4} and under the assumption
$$\forall f\in L^2(\Omega),\quad\iint_{\Omega\times\Omega}J(x-y)f(x)f(y)dxdy\geq0,$$ the solution converges to $1$ uniformly in a bounded domain $\Omega$. Moreover, \cite{Covi} considered
 \begin{align}
\frac{\partial u}{\partial t}=\Delta u+u\left(\mu-\kappa \int_{\Omega}(J_0(y)+\varepsilon J(x,y))|u^\beta(y)|dy\right)
\end{align}
on a bounded domain with Neumann boundary condition with $\beta=1,2$. Upon relying on non-linear relative entropy identities and an orthogonal decomposition, for any strictly positive and smooth $J_0(y)$ and $J(x,y)$, denote $\lambda_1$ the first eigenvalue of the operator $\Delta+\mu$ with Neumann boundary condition, it was proved that for $\lambda_1<0$, there exists $\varepsilon^*>0$, if $\varepsilon\in(0,\varepsilon^*)$, there exists a unique positive steady state which is asymptotically stable. While for $\lambda_1\geq0$, there is no positive stationary solution. The stability of the steady state for the problem with no diffusion was investigated in \cite{PE}.

 For unbounded domains, however, whether the hair trigger effect will occur or not for such nonlocal problems is an issue far from obvious, as also mentioned in  \cite{Covi,Pou}.

In this paper, depending on the balance between the weak Allee effect and the overcrowding  avoidance effect, we find sufficient conditions for the global boundedness  of solutions for \eqref{eq:1-1} and the hair trigger effect in long time behavior, 
where the localization 
technique in the proofs is motivated by \cite{AA}. As a byproduct of the global boundedness, we give a quasi-maximum principle and find that the bounds of solutions are influenced (in a certain way) by the parameters $\alpha$, $\beta$, $\mu$, $\kappa$, the kernel $J$, the spatial dimension $N$ and the $L^\infty$ norm of the initial data. The main results of this paper are the following.

\begin{theorem}\label{Th:1.1}
Suppose $\alpha\in[1,\alpha^*)$  with
\begin{align*}
\alpha^*=\left\{\begin{array}{ll}
1+\beta,\quad N=1,2,
\\
1+\frac{2\beta}{N},\quad N>2
\end{array}
\right.
\end{align*}
where $\beta , \mu, \kappa>0$ and \eqref{eq:1-3} holds. Then for every initial data $0\leq u_0\in L^\infty(\mathbb R^N)$, the nonnegative solution of \eqref{eq:1-1}--\eqref{eq:1-2} exists and is globally bounded in time, that is, there exists $M>0$ such that
\begin{align}
0\leq u(x,t)\leq M,\qquad \forall\ (x,t)\in\mathbb R^N\times[0,+\infty).\label{00}
\end{align}

Moreover, for any $K>1$, there exists $\mu^*>0$ such that for $\mu\in(0,\mu^*)$, \eqref{00} holds with
$$M=K\max\left\{1,\left(\frac{A}{\kappa}\right)^{\frac{s^*-2}{s^*(\beta+1-\alpha)-2\beta}},\|u_0\|_{L^\infty(\mathbb R^N)}\right\},$$
where
\begin{align*}
s^*=\left\{\begin{array}{ll}
&+\infty,\quad N=1,2,
\\
&\frac{2N}{N-2},\quad N>2,
\end{array}\right.
\end{align*}
and
\begin{align*}
A=\frac{4\left[4\sqrt{2}\max\left\{1,\delta_0 G(s^*,N)\right\}\right]^{\frac {s^*}{s^*-1}}}{\delta_0^N\eta}
\end{align*}
with $\delta_0$ and $\eta$ introduced in \eqref{eq:1-3} and $G(s^*,N)$ the constant that appears in Sobolev's inequality.
\end{theorem}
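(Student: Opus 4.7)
The plan is to combine a spatial localization with Sobolev embedding and an iteration in the test-function exponent, exploiting the lower bound $\inf_{B(0,\delta_0)}J\ge\eta$. Local existence of a unique nonnegative classical solution $u$ on a maximal interval $[0,T_{\max})$ follows from standard parabolic theory, since $J*u^\beta$ is a smooth, locally Lipschitz functional of $u$ on bounded sets. By the usual continuation criterion, it suffices to produce a time-uniform bound on $\|u(\cdot,t)\|_{L^\infty(\mathbb R^N)}$ to conclude $T_{\max}=\infty$ and \eqref{00}.

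The core of the argument is an $x_0$-uniform \emph{localized} $L^p$ estimate. Fix $x_0\in\mathbb R^N$ and a cutoff $\phi=\phi_{x_0}\in C_c^\infty(\mathbb R^N)$ with $\mathrm{supp}\,\phi\subset B(x_0,\delta_0/2)$, $\phi\equiv 1$ on a smaller concentric box, and $|\nabla\phi|\le C/\delta_0$. Testing \eqref{eq:1-1} against $pu^{p-1}\phi^2$, integrating by parts and applying Young's inequality to the mixed gradient term yields, with explicit constants $c_1(p)\sim(p-1)/p$ and $c_2(p)=O(p)$,
\begin{equation*}
\frac{d}{dt}\int u^p\phi^2 + c_1(p)\int|\nabla(u^{p/2}\phi)|^2 \le c_2(p)\int u^p|\nabla\phi|^2 + p\mu\int u^{p+\alpha-1}\phi^2 - p\mu\kappa\int u^{p+\alpha-1}(J*u^\beta)\phi^2.
\end{equation*}
The hypothesis on $J$ enters next: for every $x\in\mathrm{supp}\,\phi$ one has $B(x_0,\delta_0/2)\subset B(x,\delta_0)$, so
\begin{equation*}
(J*u^\beta)(x,t)\ge\eta\int_{B(x_0,\delta_0/2)}u^\beta(y,t)\,dy\ge\eta\int u^\beta\phi^2,
\end{equation*}
and the negative reaction term dominates $p\mu\kappa\eta\bigl(\int u^\beta\phi^2\bigr)\bigl(\int u^{p+\alpha-1}\phi^2\bigr)$, a nonlocal damping which is quadratic in the local mass of $u^{p/2}$.

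Sobolev's inequality $\|u^{p/2}\phi\|_{L^{s^*}}^2\le G(s^*,N)^2\int|\nabla(u^{p/2}\phi)|^2$ converts the diffusion into control of $\int u^{ps^*/2}\phi^{s^*}$. The condition $\alpha<\alpha^*$, equivalent to $s^*(\beta+1-\alpha)>2\beta$, is exactly what is needed for the Hölder interpolation of $\int u^{p+\alpha-1}\phi^2$ between $L^\beta$ and $L^{ps^*/2}$ to have an interpolation exponent strictly less than one, so a Young splitting
\begin{equation*}
p\mu\int u^{p+\alpha-1}\phi^2 \le \tfrac{p\mu\kappa\eta}{2}\Bigl(\int u^\beta\phi^2\Bigr)\int u^{p+\alpha-1}\phi^2 + \tfrac{c_1(p)}{2}\int|\nabla(u^{p/2}\phi)|^2 + R(p,\mu,\kappa)
\end{equation*}
is available, where the first two summands are absorbed into the dissipative reaction and the diffusion, respectively, and $R$ is an explicit lower-order residue. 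The surviving inequality has the dissipative ODE form $V_p'(t)\le -aV_p(t)+b$ for $V_p(t):=\int u^p\phi^2$, so $V_p$ remains bounded uniformly in $x_0$ and $t$. A Moser-type iteration along $p_k=p_0(s^*/2)^k$, tracking how the constants $c_1,c_2,R$ propagate, then yields the quantitative $L^\infty$ bound with the displayed $M$, in which $A$ is precisely the limit of the accumulated Sobolev constants.

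The main technical obstacle is the second splitting inequality: the interpolation exponent, the Sobolev exponent $s^*$, and the test exponent $p$ must be matched so that (i) the admissible interpolation range is nonempty (this uses $\alpha<\alpha^*$ sharply), (ii) the nonlocal damping beats the destabilising $\mu$-term above the threshold $\sim\kappa^{-1/\beta}$, and (iii) the constants, after iterating $k\to\infty$, sum to the explicit expression involving $A$ and $s^*$. The smallness requirement $\mu<\mu^*$ in the second half of the theorem is what allows the residue $R$ to be kept arbitrarily small relative to $b/a$, producing the factor $K$; for general $\mu$ the same scheme still yields a finite, though non-explicit, upper bound, which suffices for global existence.
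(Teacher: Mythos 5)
Your overall strategy --- localize around an arbitrary point, use $\inf_{B(0,\delta_0)}J\ge\eta$ to turn the nonlocal term into a quadratic damping of the local mass, interpolate $\int u^{p+\alpha-1}$ using Sobolev with the condition $s^*(\beta+1-\alpha)>2\beta$, and then run a Moser iteration with a small-$\mu$ refinement for the explicit constant --- is the same as the paper's. But your localization step has a genuine gap. Testing with $pu^{p-1}\phi^2$ produces the commutator term $c_2(p)\int u^p|\nabla\phi|^2$, which is supported on the annulus where $\phi<1$ and therefore is \emph{not} controlled by $V_p=\int u^p\phi^2$, nor is it a constant that can be folded into $b$. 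It is an $L^p$ quantity of $u$ on a region where you have no a priori control --- indeed it is of exactly the same order as the quantity you are trying to bound, so the claimed reduction to $V_p'\le -aV_p+b$ is circular. On a bounded domain one escapes this via global integrability, and in interior De Giorgi--Nash--Moser one escapes it by shrinking balls and paying with the $L^{p_0}$ norm on a larger ball; here, on all of $\mathbb R^N$ with only $u_0\in L^\infty$, neither exit is available, and the base case $p=p_0$ of your iteration already fails.

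The paper circumvents this with a different localization (the Hamel--Ryzhik device): it does not use a cutoff at all, but integrates the equation over the \emph{translated} box $B(x,\delta)$ and exploits the identity $\int_{B(x,\delta)}\Delta_y u^p(y,t)\,dy=\Delta_x\int_{B(x,\delta)}u^p(y,t)\,dy$. The localization error then appears not as an annulus integral but as $\Delta_x w(x,t)$ acting on $w(x,t)=\int_{B(x,\delta)}u^p\,dy$, which is harmless: one compares the resulting parabolic differential inequality for $w$ with a spatially constant ODE supersolution, for which the Laplacian vanishes. This is the step your proposal is missing, and without it (or some substitute, e.g. an exponentially weighted cutoff argument with additional absorption) the scheme does not close. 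Two smaller points: for $N=1,2$ the inequality $\|u^{p/2}\phi\|_{L^{s^*}}\le G\|\nabla(u^{p/2}\phi)\|_{L^2}$ with $s^*=\infty$ is not available in the form you state (the paper uses the full $W^{1,2}$ norm plus a Poincar\'e step, with finite large $s$ when $N=2$); and the first part of the theorem does not require small $\mu$ --- for arbitrary $\mu>0$ the paper still closes the Young splitting by paying a $\mu$-dependent constant, which your last sentence acknowledges but your splitting display, with the factor $\tfrac{p\mu\kappa\eta}{2}$ fixed at one half of the damping, glosses over.
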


For the hair trigger effect, we impose further restrictions on the initial data $u_0(x)$:

$(A)$\quad $0\leq u_0(x)\leq\kappa^{-\frac1\beta}.$

$(B)$\quad For some $\delta>0$, $\int_{B(x,\delta)}\ln u_0(s)ds\in L^\infty(\mathbb R^N)$ holds for $\alpha=1$ and
$\int_{B(x,\delta)}u_0^{1-\alpha}(s)ds\in L^\infty(\mathbb R^N)$
holds for $\alpha>1$.

\begin{theorem}\label{Th:1.2}
Suppose $(A)$ and $(B)$ are satisfied for $u_0$ and \eqref{eq:1-3} holds for $J$. If $u(x,t)$ is a global solution of \eqref{eq:1-1}--\eqref{eq:1-2} with $0\leq u(x,t)\leq\kappa^{-\frac1\beta}$ for any $(x,t)\in\mathbb R^N\times[0,+\infty)$, then
$$\lim_{t\to\infty}u(x,t)=\kappa^{-\frac1\beta}$$
locally uniformly in $\mathbb R^N$.
\end{theorem}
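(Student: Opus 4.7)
My plan combines three ingredients: a regularized change of variables that turns \eqref{eq:1-1} into a super/subsolution of the heat equation, so that condition $(B)$ transfers into a pointwise positive lower bound on $u$ via the $L^1_{\mathrm{uloc}}\to L^\infty$ smoothing of the heat semigroup; monotonicity of the spatial infimum of $u$, coming from the bound $u\le\kappa^{-1/\beta}$ in $(A)$; and a compactness plus strong-maximum-principle argument identifying the limit of that infimum as $\kappa^{-1/\beta}$.

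I would start with the key positivity estimate. Condition $(A)$ yields $\kappa J*u^\beta\le 1$, so the reaction term in \eqref{eq:1-1} is pointwise nonnegative and, in particular, $u_t\ge\Delta u$. For $\varepsilon>0$ I set
\begin{align*}
v_\varepsilon:=\ln(u+\varepsilon)\;\text{if }\alpha=1,\qquad w_\varepsilon:=\frac{(u+\varepsilon)^{1-\alpha}}{\alpha-1}\;\text{if }\alpha>1.
\end{align*}
A direct calculation should give
\begin{align*}
\partial_t v_\varepsilon-\Delta v_\varepsilon=|\nabla v_\varepsilon|^2+\frac{\mu u}{u+\varepsilon}\bigl(1-\kappa J*u^\beta\bigr)\ge 0,
\end{align*}
and, analogously, $\partial_t w_\varepsilon-\Delta w_\varepsilon\le 0$. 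Both functions are bounded (for fixed $\varepsilon$), so Duhamel's formula and positivity of the heat kernel yield $v_\varepsilon(\cdot,t)\ge e^{t\Delta}v_\varepsilon(\cdot,0)\ge e^{t\Delta}(\ln u_0)$ and $w_\varepsilon(\cdot,t)\le e^{t\Delta}w_\varepsilon(\cdot,0)\le e^{t\Delta}\bigl(u_0^{1-\alpha}/(\alpha-1)\bigr)$, with $\varepsilon$-independent right-hand sides. By $(B)$, $\ln u_0$ (resp.\ $u_0^{1-\alpha}$) lies in $L^1_{\mathrm{uloc}}(\mathbb R^N)$, and $e^{t\Delta}$ maps $L^1_{\mathrm{uloc}}$ into $L^\infty$ for each $t>0$. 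Letting $\varepsilon\to 0$ I would then obtain, at any fixed $t_0>0$, a constant $c_0=c_0(t_0)>0$ with $u(x,t_0)\ge c_0$ for every $x\in\mathbb R^N$.

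From there the conclusion should follow by a soft argument. The inequality $u_t\ge\Delta u$ combined with boundedness of $u$ gives, via the parabolic maximum principle, that $c(t):=\inf_{x\in\mathbb R^N}u(x,t)$ is non-decreasing, so $c(t)\ge c_0>0$ for $t\ge t_0$ and $c(t)\to c_\infty\in(0,\kappa^{-1/\beta}]$. I would pick $t_n\to\infty$ and $x_n\in\mathbb R^N$ with $u(x_n,t_n)\to c_\infty$ and consider the translates $u_n(x,t):=u(x+x_n,t+t_n)$; these are uniformly bounded in parabolic H\"older norms by Schauder estimates applied to \eqref{eq:1-1} (whose right-hand side is globally bounded), and the nonlocal term passes to the limit by dominated convergence (using $u_n^\beta\le\kappa^{-1}$ and $J\in L^1$). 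A subsequence then converges in $C^{2,1}_{\mathrm{loc}}(\mathbb R^N\times\mathbb R)$ to a classical entire solution $u_\infty$ of \eqref{eq:1-1} with $u_\infty\le\kappa^{-1/\beta}$, $u_\infty\ge c_\infty$, and $u_\infty(0,0)=c_\infty$. Since $\partial_tu_\infty-\Delta u_\infty=\mu u_\infty^\alpha(1-\kappa J*u_\infty^\beta)\ge 0$, the function $u_\infty-c_\infty$ is a nonnegative bounded supersolution of the heat equation vanishing at the interior space-time point $(0,0)$, and the parabolic strong maximum principle forces $u_\infty\equiv c_\infty$ on $\mathbb R^N\times(-\infty,0]$. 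Evaluating \eqref{eq:1-1} at $t=0$ then yields $\mu c_\infty^\alpha(1-\kappa c_\infty^\beta)=0$, and since $c_\infty>0$ we must have $c_\infty=\kappa^{-1/\beta}$. Together with $u\le\kappa^{-1/\beta}$, this delivers uniform (a fortiori locally uniform) convergence $u(\cdot,t)\to\kappa^{-1/\beta}$ on $\mathbb R^N$.

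The hard part will be the first step: condition $(B)$ only controls local averages of $\ln u_0$ or $u_0^{1-\alpha}$ and permits pointwise blow-up on null sets, so a direct pointwise comparison with a constant is not available. The $\varepsilon$-regularization is precisely what is needed to make the transformed function smooth and bounded and to keep the gradient-squared term in the heat--supersolution inequality with the correct sign; the smoothing $L^1_{\mathrm{uloc}}\to L^\infty$ of the heat semigroup is then what upgrades the uniform local integrability coming from $(B)$ into a genuine pointwise positive lower bound on $u(\cdot,t_0)$. Once this lower bound is in hand, the subsequent compactness and strong-maximum-principle argument is quite robust and requires no further structural hypothesis on $J$ beyond \eqref{eq:1-3}.
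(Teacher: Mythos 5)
Your argument is correct, but it takes a genuinely different route from the paper's. The paper localizes a relative entropy: it sets $F(x,t)=\int_{B(x,\delta)}h(u^\beta(y,t))\,dy$ with $h\ge 0$ vanishing only at $s=\kappa^{-1}$ (condition $(B)$ is used precisely to make $F(\cdot,0)\in L^\infty(\mathbb R^N)$, since $h$ contains $\ln s$ for $\alpha=1$ and $s^{(1-\alpha)/\beta}$ for $\alpha>1$), and proves the differential inequality $\partial_tF\le\Delta F-D$ with a dissipation $D$ proportional to $\int_{B(x,\delta)}(\kappa^{-1}-u^\beta)^2$; the technical core there is absorbing the nonlocal cross term $\iint(u^\beta(y)-\kappa^{-1})(u^\beta(y)-u^\beta(z))J(z-y)\,dz\,dy$ into the gradient terms generated by testing with $u^{\beta-\alpha}-\kappa^{-1}u^{-\alpha}$, which forces $\delta$ to be small in terms of $\mu,\kappa,\alpha,\beta$. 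Duhamel then gives $\int_0^t e^{(t-s)\Delta}D\,ds\le\|F_0\|_{L^\infty}$, so $D\to 0$, yielding $L^2$-convergence of $u^\beta$ to $\kappa^{-1}$ on balls, upgraded to uniform convergence by regularity. You instead use $(B)$ to produce a pointwise uniform lower bound $u(\cdot,t_0)\ge c_0>0$ (via the $\varepsilon$-regularized logarithmic or power transform and the $L^1_{\mathrm{uloc}}\to L^\infty$ smoothing of $e^{t\Delta}$), and then conclude by a dynamical argument: monotone spatial infimum, translation compactness, an entire limit solution, and the strong maximum principle identifying the limit constant as $\kappa^{-1/\beta}$. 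Your route avoids the entropy inequality and the smallness constraint on $\delta$, and it actually delivers the stronger conclusion of uniform convergence on all of $\mathbb R^N$ rather than only locally uniform convergence; its price is the compactness machinery, where one detail deserves care: a bounded right-hand side only gives interior $C^{1+\gamma}$ bounds, so you need one bootstrap (uniform global H\"older bound on $u$ for $t\ge t_0$, hence on $J*u^\beta$, then Schauder) before extracting $C^{2,1}_{\mathrm{loc}}$ limits. The paper's route, by contrast, is quantitative --- it bounds the time integral of the dissipation by $\|F_0\|_{L^\infty}$ --- and stays entirely within energy estimates.
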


\begin{remark}
In Theorem \ref{Th:1.2}, the hair trigger effect is proved for those solutions staying between the two constant solutions: $0$ and $\kappa^{-\frac1\beta}$. In Theorem \ref{Th:1.1}, by a series of careful calculations, we give an explicit upper bound which is larger than $\kappa^{-\frac1\beta}$.
Without comparison principle, finding the conditions on the coefficients and the initial data to ensure that $0\leq u(x,t)\leq\kappa^{-\frac1\beta}$ is still to be addressed. Though it is not proved, such solutions do exist. At least, the constant solutions are of this kind. Besides, for the case $\alpha=\beta=1$, the monotone travelling solutions connecting $0$ and $\kappa^{-\frac1\beta}$, which are proved in literatures for small $\mu's$, are also between $0$ and $\kappa^{-\frac1\beta}$.
\end{remark}

The structure of the paper is the following.
Sections \ref{sec:global-exist} and \ref{sec:hair-trigger} are devoted to prove Theorem{\cb s} \ref{Th:1.1} and \ref{Th:1.2}, respectively. Numerical simulations together with a discussion of the results are presented in Section \ref{sec:num-discussion}.
A formal derivation of the model from a mesoscopic setting in the framework of kinetic transport equations is provided in the attachment.

\section{Global boundedness of solutions}\label{sec:global-exist}

The global existence of a solution will be obtained by a local wellposedness result and a uniform in time $L^\infty$ estimate.

The local existence, uniqueness, and nonnegativity of solutions for this parabolic problem can be readily obtained by the estimates of heat kernel and the fixed point theorem applied in a closed bounded set in $L^\infty((0,T)\times\mathbb R^N)$ for suitably small $T>0$, followed by regularity arguments. This approach has been extensively employed in order to deal with the inhomogeneous heat equation, and is an adaption  of that used in  \cite{fujita}. For convenience, we list the corresponding result below.

\begin{proposition}\label{Prop-2.1}
Assume the initial data $0\leq u_0(x)\in L^\infty(\mathbb R^N)$. Then there is a maximal existence time $T_{max}\in(0,\infty]$ and $u\in C([0,T_{max}),L^\infty(\mathbb R^N))\cap C^{2,1}(\mathbb R^N\times(0,T_{max}))$ such that $u$ is the unique non-negative classical solution of problem \eqref{eq:1-1}--\eqref{eq:1-2}.
Furthermore, if $T_{max}<+\infty$, then
$$\lim_{t\to T_{max}}\|u(\cdot,t)\|_{L^\infty(\mathbb R^N)}=\infty.$$
\end{proposition}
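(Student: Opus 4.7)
The plan is to recast \eqref{eq:1-1}--\eqref{eq:1-2} as a fixed point equation via Duhamel's formula and invoke the Banach contraction principle in a suitable ball of $X_T = C([0,T]; L^\infty(\mathbb R^N))$, following the Fujita-type strategy of \cite{fujita}. Setting $F(u) = \mu u^\alpha(1 - \kappa J*u^\beta)$, I look for a fixed point of
\begin{align*}
\Phi[u](t) = e^{t\Delta} u_0 + \int_0^t e^{(t-s)\Delta} F(u(\cdot,s))\, ds,
\end{align*}
where $e^{t\Delta}$ denotes the Gaussian heat semigroup on $\mathbb R^N$, which acts as a contraction on $L^\infty(\mathbb R^N)$.

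First I verify that $F$ is locally Lipschitz on $L^\infty(\mathbb R^N)$. The decisive estimate is Young's convolution inequality combined with \eqref{eq:1-3}, which yields
\begin{align*}
\|J * u^\beta\|_{L^\infty(\mathbb R^N)} \leq \|J\|_{L^1(\mathbb R^N)}\, \|u\|_{L^\infty(\mathbb R^N)}^\beta = \|u\|_{L^\infty}^\beta.
\end{align*}
Together with the local Lipschitz continuity of $s \mapsto s^\alpha$ and $s \mapsto s^\beta$ on bounded intervals of $[0,\infty)$, this makes $F$ Lipschitz on balls of $L^\infty$. Choosing $R := 2\|u_0\|_{L^\infty}$ and $T = T(R)$ so small that $\Phi$ maps the ball $B_R \subset X_T$ into itself with Lipschitz constant $<1$, the contraction principle yields a unique mild solution on $[0,T]$. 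Since the fixed-point iterates are only a priori bounded (not signed), I work with the truncated nonlinearity $F(u_+)$ during the iteration and recover the untruncated equation a posteriori via nonnegativity.

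Nonnegativity is obtained by rewriting the equation in the linear form
\begin{align*}
u_t - \Delta u - c(x,t)\, u = 0, \qquad c(x,t) := \mu\, u^{\alpha-1}(x,t)\bigl(1 - \kappa (J * u^\beta)(x,t)\bigr),
\end{align*}
which is admissible because $\alpha \ge 1$ and all quantities involved are bounded on $[0,T]\times\mathbb R^N$. The Phragm\'en--Lindel\"of-type maximum principle for bounded solutions of linear parabolic equations on $\mathbb R^N$ then forces $u \ge 0$ whenever $u_0 \ge 0$, so the truncation may be dropped. The classical $C^{2,1}$ regularity follows by a standard bootstrap: the heat semigroup lifts $L^\infty$ data into H\"older classes, $F(u)$ inherits the same regularity via the Lipschitz estimates above, and Schauder theory then delivers second-order spatial and first-order temporal H\"older regularity.

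Finally, the extension to a maximal time $T_{\max}$ is routine because the length of the existence interval at each restart depends only on $\|u(\cdot,t_0)\|_{L^\infty}$; hence if $T_{\max} < \infty$ then necessarily $\|u(\cdot,t)\|_{L^\infty} \to \infty$ as $t\uparrow T_{\max}$. I expect the main subtlety to be the interplay between the nonlocal term and the $L^\infty$ fixed-point scheme: the standard local Fujita argument must be adjusted so that $J * u^\beta$ is controlled uniformly in space, which is precisely where assumption \eqref{eq:1-3} (integrability of $J$, giving $\|J\|_{L^1}=1$) is used. The non-integer exponents $\alpha,\beta$ combined with iterates that are only known to lie in $L^\infty$ is the second minor nuisance, handled by the truncation described above.
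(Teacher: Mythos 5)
The paper does not actually prove Proposition \ref{Prop-2.1}; it only records the same strategy you describe (heat-kernel/Duhamel representation, a fixed point argument in a closed bounded subset of $L^\infty((0,T)\times\mathbb R^N)$ in the spirit of \cite{fujita}, a maximum principle for nonnegativity, a parabolic bootstrap for $C^{2,1}$ regularity, and the standard continuation alternative). Your proposal is a faithful fleshing-out of that sketch, and for $\beta\ge 1$ it is essentially complete and correct.

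There is, however, one step that fails as written in the regime $\beta\in(0,1)$, which the statement allows (the paper assumes only $\beta>0$, and its simulations even use $\beta=0.1$). You invoke ``the local Lipschitz continuity of $s\mapsto s^\beta$ on bounded intervals of $[0,\infty)$'': for $\beta<1$ this map is only $\beta$-H\"older at $s=0$ (its derivative $\beta s^{\beta-1}$ blows up there), so the best available estimate is $\|J*(u^\beta-v^\beta)\|_{L^\infty}\le\|u-v\|_{L^\infty}^\beta$. Consequently $\Phi$ satisfies only $\|\Phi[u]-\Phi[v]\|_{X_T}\le CT\bigl(\|u-v\|_{X_T}+\|u-v\|_{X_T}^\beta\bigr)$, which is not a contraction, and the modulus $\omega(s)=s+s^\beta$ fails the Osgood condition, so uniqueness does not follow from a Gronwall-type argument either. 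Existence can still be rescued by replacing Banach with Schauder (using the compactness gained from heat-semigroup smoothing), but uniqueness for $\beta<1$ needs an additional idea --- for instance exploiting that $u^\beta-v^\beta$ enters only through the convolution and is multiplied by $u^\alpha$ with $\alpha\ge1$, or working with strictly positive solutions for $t>0$. The paper glosses over exactly the same point; but since your write-up asserts the false Lipschitz property explicitly, you should either restrict the fixed-point step to $\beta\ge1$ or supply the missing argument for small $\beta$. The remaining ingredients (Young's inequality with $\|J\|_{L^1}=1$, the truncation $F(u_+)$, the Phragm\'en--Lindel\"of maximum principle with the bounded coefficient $c$, the Schauder bootstrap, and the blow-up alternative) are all sound.
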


The following result is an application of Ghidaglia's lemma (see \cite{3}, Lemma 5.1). It is a generalized version of Lemma 4.1 in \cite{2}. For completeness, we also give the proof.
\begin{lemma}\label{Lemma1}
Assume $y_k(t)\geq0$, $k=0,1,2,\cdots$ are $C^1$ functions for $t>0$,  satisfying
\begin{align}
y_k'(t)+c_ky_k(t)\leq c_kA_k\max\{1,\sup_{t\geq0}y^2_{k-1}(t)\},\label{4.6}
\end{align}
where $c_k>0$, $A_k=\overline{a}2^{Dk}\geq1$ with $\overline{a}$, $D$, being  positive constants. Assume also that there exists a constant $K>0$ such that $y_k(0)\leq K^{2^k}$. Then for all $m\geq1$, it holds
$$y_k(t)\leq(2\overline{a})^{2^{k-m+1}-1}2^{D\left(2(2^{k-m}-1)+m2^{k+m-1}-k\right)}
\max\{\sup_{t\geq0}y_{m-1}^{2^{k-m+1}}(t),K^{2^{k}},1\}.$$
\end{lemma}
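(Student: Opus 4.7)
The natural strategy is the standard one for Alikakos/Moser--type iteration lemmas: first reduce \eqref{4.6} to a purely discrete recursion via a Gronwall-type step, then iterate $k-m+1$ times and carefully evaluate the resulting products of the constants $A_j$.

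\emph{Step 1 (Gronwall reduction).} From the linear ODE inequality \eqref{4.6}, multiplying by $e^{c_k t}$ and integrating gives
\begin{equation*}
y_k(t)\leq y_k(0)\,e^{-c_k t}+A_k\max\{1,\sup_{s\geq 0}y_{k-1}^2(s)\}\bigl(1-e^{-c_kt}\bigr)\leq\max\bigl\{y_k(0),\;A_k\max\{1,\sup_{s\geq 0}y_{k-1}^2(s)\}\bigr\}.
\end{equation*}
Introduce $Y_k:=\sup_{t\geq 0}y_k(t)$ and $Z_k:=\max\{Y_k,1\}$. Using $y_k(0)\leq K^{2^k}$, $A_k\geq 1$, and $Z_{k-1}\geq 1$, the previous line collapses to the clean one-step recursion
\begin{equation*}
Z_k\leq \max\bigl\{K^{2^k},\;A_k Z_{k-1}^{2}\bigr\}.
\end{equation*}

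\emph{Step 2 (Iteration).} Apply this recursion $k-m+1$ times. At each step, the squaring pushes the power of $Z$ up by a factor of $2$ and distributes the product of constants according to the geometric weights $2^{k-i}$. A straightforward induction (using $A_j\geq 1$ to absorb the $K^{2^k}$ branch into the larger product at the next stage) yields
\begin{equation*}
Z_k\leq \Bigl(\prod_{i=m}^{k} A_i^{2^{k-i}}\Bigr)\;\max\bigl\{K^{2^k},\;Z_{m-1}^{2^{k-m+1}}\bigr\}.
\end{equation*}
Whenever the estimate is passed from $\max\{a,b\}$ to $a+b\leq 2\max\{a,b\}$ (in order to keep the recursion multiplicative), one extra factor of $2$ is absorbed at each of the $2^{k-m+1}-1$ steps, so the effective constant becomes $(2\bar a)^{2^{k-m+1}-1}$ in place of $\bar a^{2^{k-m+1}-1}$.

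\emph{Step 3 (Computing the product).} Substituting $A_i=\bar a\,2^{Di}$, the geometric sums
\begin{equation*}
\sum_{i=m}^{k}2^{k-i}=2^{k-m+1}-1,\qquad \sum_{i=m}^{k}i\cdot 2^{k-i}=(m+1)\,2^{k-m+1}-k-2
\end{equation*}
give $\prod_{i=m}^{k} A_i^{2^{k-i}}=\bar a^{\,2^{k-m+1}-1}\,2^{D\bigl((m+1)2^{k-m+1}-k-2\bigr)}$. Rewriting $(m+1)2^{k-m+1}-k-2=2(2^{k-m}-1)+m\,2^{k-m+1}-k$, we recover exactly the exponent appearing in the statement of the lemma, while the accumulated factor $2^{2^{k-m+1}-1}$ combines with $\bar a^{\,2^{k-m+1}-1}$ to give $(2\bar a)^{2^{k-m+1}-1}$.

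\emph{Conclusion and main obstacle.} Since $y_k(t)\leq Y_k\leq Z_k$ and $Z_{m-1}\leq \max\{\sup_{t\geq 0}y_{m-1}(t),1\}$, combining Steps~1--3 yields the stated bound, with the final maximum $\max\{\sup_{t}y_{m-1}^{2^{k-m+1}},K^{2^k},1\}$ absorbing the three candidate branches that survive the iteration. The main technical difficulty is purely bookkeeping: keeping track of the factors of $2$ arising from each squaring when passing between $\max$ and sum, and verifying the two closed-form evaluations of $\sum 2^{k-i}$ and $\sum i\cdot 2^{k-i}$ match the (slightly awkwardly written) exponent in the statement. No further analytic tools are needed beyond the elementary Gronwall estimate and induction on~$k$.
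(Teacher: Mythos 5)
Your proof is correct and takes essentially the same route as the paper's: an integrating-factor (Gronwall) reduction to the one-step recursion $y_k\le 2A_k\max\{\sup_t y_{k-1}^2,K^{2^k},1\}$, followed by iteration down to level $m-1$ and evaluation of the geometric sums $\sum 2^{k-i}$ and $\sum i\,2^{k-i}$. Your closed form $(m+1)2^{k-m+1}-k-2$ agrees with the paper's exponent once the statement's $m2^{k+m-1}$ is read as a typo for $m2^{k-m+1}$ (the form the paper itself uses when applying the lemma).
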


\begin{proof} From \eqref{4.6} we obtain
\begin{align*}
(e^{c_kt}y_k(t))'\leq c_kA_ke^{c_kt}\max\{1,\sup_{t\geq0}y_{k-1}^{2}(t)\}
\end{align*}
and then
\begin{align*}
y_k(t)&\leq (1-e^{-c_kt})A_k\max\{1,\sup_{t\geq0}y_{k-1}^{2}(t)\}+e^{-c_kt}y_k(0)
\\
&\leq2A_k\max\{1,\sup_{t\geq0}y_{k-1}^{2}(t),y_k(0)\}
\\
&\leq2A_k\max\{\sup_{t\geq0}y_{k-1}^{2}(t),K^{2^k},1\}.
\end{align*}
By an iterative procedure we obtain
\begin{align*}
y_k(t)&\leq2A_k(2A_{k-1})^2(2A_{k-2})^{2^2}(2A_{k-3})^{2^3}\cdots(2A_m)^{2^{k-m}}\max\{\sup_{t\geq0}y_{m-1}^{2^{k-m+1}}(t),K^{2^{k}},1\}
\\
=&(2\overline{a})^{\sum_{i=0}^{i=k-m}2^{i}}2^{D\sum_{i=0}^{i=k-m}(k-i)2^{i}}\max\{\sup_{t\geq0}y_{m-1}^{2^{k-m+1}}(t),K^{2^{k}},1\}
\\
=&(2\overline{a})^{2^{k-m+1}-1}2^{D\left(2(2^{k-m}-1)+m2^{k+m-1}-k\right)}
\max\{\sup_{t\geq0}y_{m-1}^{2^{k-m+1}}(t),K^{2^{k}},1\}.
\end{align*}
\end{proof}

%\noindent
Now we are ready to proceed with the global $L^\infty$ estimates.

\noindent
{\bf Proof of Theorem \ref{Th:1.1}}. For any $x\triangleq(x_{1},x_{2},\cdots,x_{N})\in\mathbb R^N$, choose $0<\delta\leq\frac12\delta_0$, and denote $B(x,\delta):=\{y\triangleq(y_1,y_2,
\cdots,y_n)\in\mathbb R^N\ |\ |y_i-x_i|\leq\delta,i=1,2,\cdots,N\}$ with $|B(x,\delta)|=(2\delta)^N$. Multiply \eqref{eq:1-1} by $pu^{p-1}\varphi_\varepsilon$ with $p\ge 1$, $\varphi_\varepsilon\in C_0^\infty(B(x,\delta))$, and $\varphi_\varepsilon(y)\to1$ locally uniformly in $B(x,\delta)$ as $\varepsilon\to0$. Integrating by parts over $B(x,\delta)$ we obtain
\begin{align}
&\frac{\partial}{\partial t}\int_{B(x,\delta)}u^p\varphi_\varepsilon dy+\frac{4(p-1)}{p}
\int_{B(x,\delta)}|\nabla u^{\frac p2}|^2\varphi_\varepsilon dy\nonumber
\\
=&\int_{B(x,\delta)}\Delta u^p\varphi_\varepsilon dy
+p\mu\int_{B(x,\delta)}u^{p+\alpha-1}(1-\kappa J*u^\beta)\varphi_\varepsilon dy.\label{3.0}
\end{align}
Taking $\varepsilon\to0$, we obtain
\begin{align}
&\frac{\partial}{\partial t}\int_{B(x,\delta)}u^pdy+\frac{4(p-1)}{p}
\int_{B(x,\delta)}|\nabla u^{\frac p2}|^2dy\nonumber
\\
=&\int_{B(x,\delta)}\Delta u^pdy
+p\mu\int_{B(x,\delta)}u^{p+\alpha-1}(1-\kappa J*u^\beta)dy\notag
\\
\leq&\Delta\int_{B(x,\delta)}u^pdy+p\mu\int_{B(x,\delta)}u^{p+\alpha-1}dy
\left(1- \eta\kappa \int_{B(x,\delta)}u^\beta dy\right),\label{eq:3.1}
\end{align}
where we have used the fact that
for any $y\in B(x,\delta)$, $J(z-y)\geq\eta$ for $z\in B(y,2\delta)$ and $B(x,\delta)\subset B(y,2\delta)$, then
$$J*u^\beta(y,t)\geq\eta\int_{B(y,2\delta)}u^\beta(z,t)dz
\geq\eta\int_{B(x,\delta)}u^\beta(z,t)dz$$
and
$$\int_{B(x,\delta)}\Delta u^pdy=\int_{B(0,\delta)}\Delta u^p(y+x,t)dy=\Delta\int_{B(0,\delta)}u^p(y+x,t)dy=\Delta\int_{B(x,\delta)}u^pdy.$$
Now we proceed to estimate the term
$\int_{B(x,\delta)}u^{p+\alpha-1}dy$.
The proof is divided into three steps, namely: the $L^p$ estimates, the $L^\infty$ estimates, and a quasi-maximum principle.

\textbf{Step 1. $L^p$ estimates.} Firstly by H\"older's inequality and the Sobolev embedding theorem, for
\begin{align}
p\geq\max\{\beta+1-\alpha,1\},\label{p}
\end{align}
\begin{align}
\max\{\frac{N(\alpha-1)}{p},\frac{2(\alpha-1)}{p},1\}<r\leq\frac{2(p+\alpha-1)}{p},\label{rr}
\end{align}
and
\begin{align}\label{s}
s=\left\{\begin{array}{ll}
&+\infty,\quad N=1,
\\
&\frac{2N}{N-2},\quad N>2,
\end{array}\right.
\qquad
\frac{2pr}{pr-2(\alpha-1)}<s<+\infty,\quad  N=2,
\end{align}
we get
\begin{align}
&\int_{B(x,\delta)}u^{p+\alpha-1}dy=\|u^{\frac p2}\|_{L^{\frac{2(p+\alpha-1)}{p}}(B(x,\delta))}^{\frac{2(p+\alpha-1)}{p}}\leq \|u^{\frac p2}\|_{L^{s}(B(x,\delta))}^{\frac{2\lambda(p+\alpha-1)}{p}}\|u^{\frac p2}\|_{L^r(B(x,\delta))}^{\frac{2(1-\lambda)(p+\alpha-1)}{p}}
\label{3.2}
\\
\leq &\left(S(N,\delta)\|u^{\frac p2}\|_{W^{1,2}(B(x,\delta))}\right )^{\frac{2\lambda(p+\alpha-1)}{p}}\|u^{\frac p2}\|_{L^r(B(x,\delta))}^{\frac{2(1-\lambda)(p+\alpha-1)}{p}}\notag
\\
\leq&\left(2S(N,\delta)\|\nabla u^{\frac p2}\|_{L^2(B(x,\delta))}\right)^{\frac{2\lambda(p+\alpha-1)}{p}}\|u^{\frac p2}\|_{L^r(B(x,\delta))}^{\frac{2(1-\lambda)(p+\alpha-1)}{p}}\notag
\\
\quad&+\left(2S(N,\delta)\|u^{\frac p2}\|_{L^2(B(x,\delta))}\right)^{\frac{2\lambda(p+\alpha-1)}{p}}
\|u^{\frac p2}\|_{L^r(B(x,\delta))}^{\frac{2(1-\lambda)(p+\alpha-1)}{p}},\nonumber
\end{align}
where
\begin{align*}
S(N,\delta)=\sqrt{2}\max\{(2\delta)^{N(\frac1s-\frac12)},(2\delta)^{1-\frac N2+\frac Ns}G(s,N)\}
\end{align*}
is the Sobolev embedding constant(see Theorem 2.1 and Theorem 3.1 in \cite{Sobolev}) and
\begin{align}
\lambda=\frac{\frac{p}{2(p+\alpha-1)}-\frac{1}{r}}{\frac1s-\frac1r},
\label{3.10}
\end{align}
then it is easy to verify from \eqref{p} and \eqref{rr} that
\begin{align}
\lambda\in[0,1),\quad 2\lambda(p+\alpha-1)/p\in[0,2).\label{3.9}
\end{align}
On the other hand, by Poincar\'e's inequality, we have
\begin{align*}
\|u^{\frac p2}-\overline{u^{\frac p2}}\|_{L^{2}(B(x,\delta))}&\leq P(N,\delta)\|\nabla u^{\frac p2}\|_{L^{2}(B(x,\delta))},
\end{align*}
where $P(N,\delta)=C(N)\delta$ is the Poincar\'e constant (see Theorem 3.3 in \cite{Sobolev}) and $\bar u$ represents the average of $u$ over $B(x,\delta)$. Then
\begin{align}
\|u^{\frac p2}\|_{L^{2}(B(x,\delta))}&\leq\|\overline{u^{\frac p2}}\|_{L^2(B(x,\delta))}+P(N,\delta)\|\nabla u^{\frac p2}\|_{L^{2}(B(x,\delta))}\nonumber
\\
=&\|u^{\frac p2}\|_{L^1(B(x,\delta))}|B(x,\delta)|^{-\frac12}+P(N,\delta)\|\nabla u^{\frac p2}\|_{L^{2}(B(x,\delta))}\label{L2}
\\
\leq&\|u^{\frac p2}\|_{L^r(B(x,\delta)}|B(x,\delta)|^{\frac12-\frac{1}{r}}+P(N,\delta)\|\nabla u^{\frac p2}\|_{L^{2}(B(x,\delta))}.\nonumber
\end{align}
Inserting \eqref{L2} into \eqref{3.2} we obtain
\begin{align}
&\int_{B(x,\delta)}u^{p+\alpha-1}dy\nonumber
\\
\leq&\left(2S(N,\delta)\|\nabla u^{\frac p2}\|_{L^2(B(x,\delta))}\right)^{\frac{2\lambda(p+\alpha-1)}{p}}\|u^{\frac p2}\|_{L^r(B(x,\delta))}^{\frac{2(1-\lambda)(p+\alpha-1)}{p}}\label{3.15}
\\
&+\left[2S(N,\delta)\left(\|u^{\frac p2}\|_{L^r(B(x,\delta))}|B(x,\delta)|^{\frac12-\frac{1}{r}}+P(N,\delta)\|\nabla u^{\frac p2}\|_{L^{2}(B(x,\delta))}\right)\right]^{\frac{2\lambda(p+\alpha-1)}{p}}\|u^{\frac p2}\|_{L^r(B(x,\delta))}^{\frac{2(1-\lambda)(p+\alpha-1)}{p}}\nonumber
\\
&\leq 2\left(C_1(N,\delta)\|\nabla u^{\frac p2}\|_{L^2(B(x,\delta))}\right)^{\frac{2\lambda(p+\alpha-1)}{p}}\|u^{\frac p2}\|_{L^r(B(x,\delta))}^{\frac{2(1-\lambda)(p+\alpha-1)}{p}}+\left(C_2^\lambda(N,\delta,r)\|u^{\frac p2}\|_{L^r(B(x,\delta))}\right)^{\frac{2(p+\alpha-1)}{p}},\nonumber
\end{align}
where
\begin{align}
C_1(N,\delta)=2S(N,\delta)(1+2P(N,\delta)), \quad C_2(N,\delta,r)=4S(N,\delta)(2\delta)^{\frac{N(r-2)}{2r}}.\label{1.11}
\end{align}
Denote
\begin{equation}\label{erstes-Q}
Q:=\frac{2(1-\lambda)(p+\alpha-1)}{p-\lambda(p+\alpha-1)}.
\end{equation}
Then from \eqref{3.9} we have $$Q=2\left(1+\frac{\alpha-1}{p-\lambda(p+\alpha-1)}\right)\geq\frac{2(p+\alpha-1)}{p},$$ which together with Young's inequality leads to
\begin{align}
 &2\left(C_1(N,\delta)\|\nabla u^{\frac p2}\|_{L^2(B(x,\delta))}\right)^{\frac{2\lambda(p+\alpha-1)}{p}}\|u^{\frac p2}\|_{L^r(B(x,\delta))}^{\frac{2(1-\lambda)(p+\alpha-1)}{p}}
+\left(C_2^\lambda(N,\delta,r)\|u^{\frac p2}\|_{L^r(B(x,\delta))}\right)^{\frac{2(p+\alpha-1)}{p}}\nonumber
\\
\leq& \frac{p-1}{\mu p^2}\|\nabla u^{\frac p2}\|_{L^2(B(x,\delta))}^2+C_3(p,\mu,N,\delta)\|u^{\frac p2}\|_{L^r(B(x,\delta))}^Q
+\left(C_2^\lambda(N,\delta,r)\|u^{\frac p2}\|_{L^r(B(x,\delta))}\right)^{\frac{2(p+\alpha-1)}{p}}\nonumber
\\
\leq& \frac{p-1}{\mu p^2}\|\nabla u^{\frac p2}\|_{L^2(B(x,\delta))}^2+\left(C_3(p,\mu,N,\delta)+C_2^{\frac{2\lambda(p+\alpha-1)}{p}}(N,\delta,r)\right)\|u^{\frac p2}\|_{L^r(B(x,\delta))}^Q\label{3.3}
\\
&+C_2^{\frac{2\lambda(p+\alpha-1)}{p}}(N,\delta,r),\nonumber
\end{align}
where $C_3(p,\mu,N,\delta)=2
\left(\frac{2\mu p^2C_1^2(N,\delta)}{p-1}\right)^{\frac{\lambda(p+\alpha-1)}{p-\lambda(p+\alpha-1)}}$.
%{\cb For the first inequality in \eqref{3.3} I get by applying the Young inequality the following coefficient of $\|u^{\frac p2}\|_{L^r(B(x,\delta))}^Q$:
%\begin{align*}
%&S(N,\delta)^{\frac{2\lambda(p+\alpha-1)}{p-\lambda(p+\alpha-1)}}\left(\frac{2(p-1)}{\lambda (p+1)(p+\alpha -1)}\right)^{-\frac{\lambda(p+\alpha-1)}{p-\lambda(p+\alpha-1)}}\frac{p-\lambda (p+\alpha -1)}{p}\\
%&\le  S(N,\delta)^{\frac{2\lambda(p+\alpha-1)}{p-\lambda(p+\alpha-1)}}\left(\frac{2(p-1)}{\lambda (p+1)(p+\alpha -1)}\right)^{-\frac{\lambda(p+\alpha-1)}{p-\lambda(p+\alpha-1)}}.
%\end{align*}}

\noindent
Next we proceed to estimate the term $\|u^{\frac p2}\|_{L^r}^Q$. Notice that for $p\geq\max\{\beta-\alpha+1,1\}$,
\begin{align}
\|u^{\frac p2}\|_{L^r(B(x,\delta))}^Q&\leq\|u^{\frac p2}\|_{L^{\frac{2\beta}{p}}(B(x,\delta))}^{\theta Q}
\|u^{\frac p2}\|_{L^{\frac{2(p+\alpha-1)}{p}}(B(x,\delta))}^{(1-\theta)Q}\nonumber
\\
&= \left(\|u^{\frac p2}\|_{L^{\frac{2\beta}{p}}(B(x,\delta))}^{\frac{2\beta}{p}}
\|u^{\frac p2}\|_{L^{\frac{2(p+\alpha-1)}{p}}(B(x,\delta))}^{\frac{2(p+\alpha-1)}{p}}\right)
^{\frac{Q\theta p}{2\beta}}\|u^{\frac p2}\|_{L^{\frac{2(p+\alpha-1)}{p}}(B(x,\delta))}^{(1-\theta)Q-\frac{(p+\alpha-1)\theta Q}{\beta}},\label{3.4}
\end{align}
where $\theta=\frac{\frac2r-\frac{p}{p+\alpha-1}}{\frac{p}{\beta}-\frac{p}{p+\alpha-1}}$.
Choose
\begin{align}
r=\frac{p+\alpha-1+\beta}{p},\label{r}
 \end{align}
 which obviously satisfies \eqref{rr} and then $\theta=\frac{\beta}{p+\alpha-1+\beta}$. Then we have $$(1-\theta)Q-(p+\alpha-1)\theta Q/\beta=0$$ and
\begin{align}
\|u^{\frac p2}\|_{L^r(B(x,\delta))}^Q\leq \left(\|u^{\frac p2}\|_{L^{\frac{2\beta}{p}}(B(x,\delta))}^{\frac{2\beta}{p}}\|u^{\frac p2}\|_{L^{\frac{2(p+\alpha-1)}{p}}(B(x,\delta))}^{\frac{2(p+\alpha-1)}{p}}\right)
^{\frac{Q\theta p}{2\beta}}.\label{3.44}
\end{align}
By using the definition of $\lambda$ in \eqref{3.10} and the above choices of $r$ and $\theta$,  we have that
\begin{align}
\frac{Q\theta p}{2\beta}=\frac{sp-2(p+\alpha-1)}{s(p-\alpha+1+\beta)-2(p+\alpha-1+\beta)}.\label{1.7}
\end{align}
Under the assumption
\begin{align*}
1\leq\alpha<\alpha^*=\left\{\begin{array}{ll}
1+\beta,\quad N=1,2,
\\
1+\frac{2\beta}{N},\quad N>2,
\end{array}
\right.
\end{align*}
\eqref{1.7} implies $$\frac{Q\theta p}{2\beta}<1.$$ Furthermore, from \eqref{3.44} and Young's inequality, we have
\begin{align}
&(C_3(p,\mu,N,\delta)+C_2^{\frac{2\lambda(p+\alpha-1)}{p}}(N,\delta,r))
\|u^{\frac p2}\|_{L^r(B(x,\delta))}^Q\nonumber
\\
\leq & \frac{\eta\kappa}{4}\|u^{\frac p2}\|_{L^{\frac{2\beta}{p}}(B(x,\delta))}^{\frac{2\beta}{p}}\|u^{\frac p2}\|_{L^{\frac{2(p+\alpha-1)}{p}}(B(x,\delta))}^{\frac{2(p+\alpha-1)}{p}}
+C_4(p,\mu,N,\delta,r,\eta,\kappa),\label{3.5}
\end{align}
where $C_4(p,\mu,N,\delta,r,\eta,\kappa)=\left(C_3(p,\mu,N,\delta)+C_2^{\frac{2\lambda(p+\alpha-1)}
{p}}(N,\delta,r)\right)^{\frac{2\beta}{2\beta-Q\theta p}}\left(\frac{4}{\eta\kappa}\right)^{\frac{Q\theta p}{2\beta-Q\theta p}}$, $\eta $ is the constant in \eqref{eq:1-3}.
Then inserting \eqref{3.3} and \eqref{3.5} into \eqref{3.15},  we obtain
\begin{align}
&2p\mu\int \limits _{B(x,\delta)}u^{p+\alpha-1}dy\label{3.6}
\\
\leq&\frac{2(p-1)}{p}\int \limits _{B(x,\delta)}|\nabla u^{\frac p2}|^2dy+\frac{p\eta\mu\kappa}{2}\int\limits _{B(x,\delta)}u^{p+\alpha-1}dy\int\limits _{B(x,\delta)}u^\beta dy+2p\mu C_5(p,\mu,N,\delta,r,\eta,\kappa)\nonumber
\end{align}
with $C_5(p,\mu,N,\delta,r,\eta,\kappa)=C_4(p,\mu,N,\delta,r,\eta,\kappa)+C_2^{\frac{2\lambda(p+\alpha-1)}{p}}(N,\delta,r)$.
From \eqref{eq:3.1}, \eqref{3.6} and the fact that $$\int_{B(x,\delta)}u^p dy\leq\int_{B(x,\delta)}u^{p+\alpha-1}dy+(2\delta)^N,$$
we obtain
\begin{align}
\frac{\partial}{\partial t}\int_{B(x,\delta)}u^pdy+p\mu\int_{B(x,\delta)}u^pdy\leq
\Delta\int_{B(x,\delta)}u^pdy+\left(2C_5(p,\mu,N,\delta,r,\eta,\kappa)+(2\delta)^N\right)p\mu.\label{y3.7}
\end{align}
%{\cb For this to hold we need $p\ge 1$, otherwise don't get rid of the term with gradient; we only required above $p>\beta -\alpha +1$. Asking for %$\beta -\alpha +1\ge 1$ drives up the space dimension: $N\ge 3$.}
\noindent
Next we compare \eqref{y3.7} with the following ordinary differential equation
\begin{align}
\left\{\begin{array}{ll}\label{3.8}
\frac{d}{dt}w+p\mu w=\left(2C_5(p,\mu,N,\delta,r,\eta,\kappa)+(2\delta)^N\right)p\mu,
\\
w(0)=(2\delta)^N\|u_0\|^p_{L^\infty(\mathbb R^N)}.
\end{array}\right.
\end{align}
By the comparison principle, we obtain
\begin{align*}
\int_{B(x,\delta)}u^pdy\leq w&=w(0)e^{-p\mu t}+(2C_5(p,\mu,N,\delta,r,\eta,\kappa)+(2\delta)^N)(1-e^{-p\mu t})
\\
&\leq 2C_5(p,\mu,N,\delta,r,\eta,\kappa)+(2\delta)^N(1+\|u_0\|^p_{L^\infty(\mathbb R^N)})
\end{align*}
for any $(x,t)\in\mathbb R^N\times[0,\infty)$. Then for any $p\geq\max\{\beta-\alpha+1,1\}$, with the explicit representation of the constant $C_5$, we obtain
\begin{align}
\|u\|_{L^p(B(x,\delta))}\leq&\left[2\left(2\left(\frac{2\mu p^2C_1^2(N,\delta)}{p-1}\right)^{\frac{\lambda(p+\alpha-1)}{p-\lambda(p+\alpha-1)}}+C_2^{\frac{2\lambda(p+\alpha-1)}
{p}}(N,\delta,r)\right)^{\frac{2\beta}{2\beta-Q\theta p}}\left(\frac{4}{\eta\kappa}\right)^{\frac{Q\theta p}{2\beta-Q\theta p}}\right.\nonumber
\\
&\left.+2C_2^{\frac{2\lambda(p+\alpha-1)}{p}}(N,\delta,r)+(2\delta)^N(1+\|u_0\|^p_{L^\infty(\mathbb R^N)})\right]^{\frac1p},\label{3.1}
\end{align}
which tends to $+\infty$ as $p\to\infty$.
More precisely, by \eqref{3.10} and \eqref{1.7}, the exponents have the following explicit representations:
\begin{align*}
\frac{\lambda(p+\alpha-1)}{p-\lambda(p+\alpha-1)}=
\frac{p+\alpha-1-\beta}{(p+\alpha-1+\beta)(1-\frac2s)-2(\alpha-1)},
\end{align*}
\begin{align*}
\frac{2\lambda(p+\alpha-1)}{p}=
\frac{s(p+\alpha-1-\beta)}{ps-(p+\alpha-1+\beta)},
\end{align*}
\begin{align*}
\frac{2\beta}{2\beta-Q\theta p}
=\frac{(s-2)(p+\beta)-(s+2)(\alpha-1)}{s(\beta+1-\alpha)-2\beta},
\end{align*}
and
\begin{align*}
\frac{Q\theta p}{2\beta-Q\theta p}
=\frac{sp-2(p+\alpha-1)}{s(\beta+1-\alpha)-2\beta}.
\end{align*}
%\begin{align*}
%&\left(C_5(p,\mu,N,\delta,r,\eta)\mu+(2\delta)^N(\mu+\|u_0\|^p_{L^\infty(\mathbb R^N)})\right)^{\frac1p}
%\\
%\geq&(C_5(p,\mu,N,\delta,r,\eta)\mu)^{\frac1p}
%\\
%\geq&(C_4(p,\mu,N,\delta,r,\eta)\mu)^{\frac1p}
%\\
%\geq&\mu^{\frac1p}(C_3(p))^{\frac1p\frac{2\beta}{2\beta-Q\theta p}}\left(\frac{p\eta}{2(p+1)}\right)^{-\frac1p\frac{Q\theta p}{2\beta-Q\theta p}}
%\\
%=&\mu^{\frac1p}\left(2C_1^{\frac{2\lambda(p+\alpha-1)}{p-\lambda(p+\alpha-1)}}
%\left(\frac{p-1}{\mu p(p+1)}\right)^{-\frac{\lambda(p+\alpha-1)}{p-\lambda(p+\alpha-1)}}\right)^{\frac1p\frac{2\beta}{2\beta-Q\theta %p}}\left(\frac{p\eta}{2(p+1)}\right)^{-\frac1p\frac{Q\theta p}{2\beta-Q\theta p}}
%\\
%\to&\infty \quad\hbox{as}\;p\to\infty.
%\end{align*}
%Thus, we cannot obtain the global boundedness of $u$ directly by letting $p\to\infty$ in \eqref{3.1}.

%\noindent
\textbf{Step 2. $L^\infty$ estimate via Moser iteration.} Now we advance to deduce the $L^\infty$ estimates for $u$ by an iterative procedure. Denote
\begin{align}
p_k:=2^k+h,\label{1.5}
\end{align}
with
\begin{align}
h:=\frac{2(s-1)(\alpha-1)}{s-2}\label{1.10}
\end{align}
chosen to verify \eqref{QR} and $s$ defined in \eqref{s}.
Then the estimate \eqref{3.1} in Step $1$ for $$p=p_{m-1}=2^{m-1}+h\quad(m\geq1)$$ gives the starting point of the iteration. By taking $p=p_k$ in \eqref{eq:3.1}, we have
\begin{align}
&\frac{\partial}{\partial t}\int_{B(x,\delta)}u^{p_k}dy+\frac{4(p_k-1)}{p_k}
\int_{B(x,\delta)}|\nabla u^{\frac{p_k}{2}}|^2dy+p_k\mu\eta\kappa\int_{B(x,\delta)}u^{p_k+\alpha-1}dy\int_{B(x,\delta)}u^\beta dy\nonumber
\\
\leq&\Delta\int_{B(x,\delta)}u^{p_k}dy+p_k\mu\int_{B(x,\delta)}u^{p_k+\alpha-1}dy.\label{eq:3.2}
\end{align}
From \eqref{3.15} and \eqref{3.3}, we obtain
\begin{align}
&\int_{B(x,\delta)}u^{p_k+\alpha-1}dy\nonumber
\\
\leq &\frac{p_k-1}{\mu p_k^2}\|\nabla u^{\frac {p_k}{2}}\|_{L^2(B(x,\delta))}^2+C_2^{\frac{2\lambda_k(p_k+\alpha-1)}{p_k}}(N,\delta,r_k)\label{3.11}
\\
&+\left(2
\left(\frac{2\mu p_k^2C_1^2(N,\delta)}{p_k-1}\right)^{\frac{\lambda_k(p_k+\alpha-1)}{p_k-\lambda_k(p_k+\alpha-1)}}
+C_2^{\frac{2\lambda_k(p_k+\alpha-1)}{p_k}}(N,\delta,r_k)\right)\left(\int_{B(x,\delta)}u^{p_{k-1}}\right)^{\frac{Q_k}{r_k}},\nonumber
\end{align}
where
\begin{align*}
r_k:=\frac{2p_{k-1}}{p_k},
\quad
\lambda_k:=\frac{\frac{p_k}{2(p_k+\alpha-1)}-\frac {p_k}{2p_{k-1}}}{\frac1s-\frac {p_k}{2p_{k-1}}},\quad
Q_k:=\frac{2(1-\lambda_k)(p_k+\alpha-1)}{p_k-\lambda_k(p_k+\alpha-1)}.
\end{align*}
Then by a direct calculation, we obtain
\begin{align}
0<\lambda_k<1,\quad0<\frac{2\lambda_k(p_k+\alpha-1)}{p_k}<2,\nonumber
\end{align}
and
\begin{align}
\frac{Q_k}{r_k}=2.\label{QR}
\end{align}
Notice the fact that
\begin{align}
\int_{B(x,\delta)}u^{p_k} dy\leq\int_{B(x,\delta)}u^{p_k+\alpha-1}dy+(2\delta)^N.\label{3.18}
\end{align}
Substituting \eqref{3.11} and \eqref{3.18} into \eqref{eq:3.2} leads to
\begin{align}
&\frac{\partial}{\partial t}\int_{B(x,\delta)}u^{p_k}dy+p_k\mu\int_{B(x,\delta)}u^{p_k}dy\label{eq:3.4}
\\
\leq&\Delta\int_{B(x,\delta)}u^{p_k}dy
+2p_k\mu C_0
\left(\int_{B(x,\delta)}u^{p_{k-1}}dy\right)^2+2p_k\mu C_0\nonumber
\end{align}
with
\begin{align}
C_0=2
\left(\frac{2\mu p_k^2C_1^2(N,\delta)}{p_k-1}\right)^{\frac{\lambda_k(p_k+\alpha-1)}{p_k-\lambda_k(p_k+\alpha-1)}}
+C_2^{\frac{2\lambda_k(p_k+\alpha-1)}{p_k}}(N,\delta,r_k)+(2\delta)^N.\label{C}
\end{align}
By direct calculations, we have
\begin{align}
\frac{\lambda_k(p_k+\alpha-1)}{p_k-\lambda_k(p_k+\alpha-1)}=\frac{s}{s-2}.\label{1.3}
\end{align}
Further estimates for the constants that appear in $C_0$ are given in the following for $k\geq1$:
\begin{align}
r_k\geq1\qquad\frac{p_k}{p_k-1}\leq2,\qquad\frac{2\lambda_k(p_k+\alpha-1)}{p_k}\leq\alpha+1.\label{1.4}
\end{align}
Using \eqref{1.3} and \eqref{1.4}, from \eqref{C}, we obtain
\begin{align*}
C_0\leq &2\left(4C_1^2(N,\delta)\mu
(2^k+h)\right)^{\frac{s}{s-2}}+C_2^{\alpha+1}(N,\delta,1)+(2\delta)^N
\\
\leq&\left(2\left(4C_1^2(N,\delta)\mu(1+h)\right)^{\frac{s}{s-2}}+C_2^{\alpha+1}(N,\delta,1)+(2\delta)^N\right)2^{\frac{sk}{s-2}}.
\end{align*}
Denote
\begin{align}
a_0:=2\left(4C_1^2(N,\delta)\mu(1+h)\right)^{\frac{s}{s-2}}+C_2^{\alpha+1}(N,\delta,1)+(2\delta)^N,\label{1.12}
\end{align}
then from \eqref{eq:3.4}, we obtain
\begin{align*}
&\frac{\partial}{\partial t}\int_{B(x,\delta)}u^{p_k}dy+p_k\mu\int_{B(x,\delta)}u^{p_k}dy
\\
\leq&\Delta\int_{B(x,\delta)}u^{p_k}dy
+4p_k\mu a_02^{\frac{sk}{s-2}}\max\left\{1,\sup_{t\geq0}
\left(\int_{B(x,\delta)}u^{p_{k-1}}dy\right)^2\right\}.
\end{align*}
For $k\geq m\geq1$, let $y_k(t)$ be the solution of the following iterating ordinary differential equation
\begin{align*}
&y_k'(t)+p_k\mu y_k(t)=4p_k\mu a_02^{\frac{sk}{s-2}}\max\{\sup_{t\geq0}y_{k-1}^2(t),1\},
\\
&y_k(0)=\|u_0\|_{L^\infty(\mathbb R^N)}^{p_k}(2\delta)^N
\end{align*}
with $$y_{m-1}(t)=\sup_{x\in\mathbb R^N}\int_{B(x,\delta)}u^{p_{m-1}}dx$$
 being the starting point of the iteration. Now we are ready to use Lemma \ref{Lemma1}.
For $\overline{a}=\max\{1,4a_0\}$, it is obvious that
$\overline{a}2^{\frac{sk}{s-2}}\geq1$ for all $k\geq m.$
Furthermore, $\delta$ can be chosen such that $(2\delta)^N<\frac{1}{\|u_0\|_{L^\infty(\mathbb R^N)}^h}$, therefore
$$y_k(0)=\|u_0\|_{L^\infty(\mathbb R^N)}^{p_k}(2\delta)^N\leq\|u_0\|_{L^\infty(\mathbb R^N)}^{2^k}.$$
From Lemma \ref{Lemma1} with $c_k=p_k\mu$, $D=\frac{s}{s-2}$, we obtain
\begin{align*}
y_k(t) \leq(2\overline{a})^{2^{k-m+1}-1}2^{\frac{s}{s-2}\left(2(2^{k-m}-1)+m2^{k-m+1}-k\right)}
\max\left\{\sup_{t\geq0}y_{m-1}^{2^{k-m+1}}(t),\|u_0\|_{L^\infty(\mathbb R^N)}^{2^{k}},1\right\}
\end{align*}
and then by the comparison principle, we have
\begin{align*}
&\|u(\cdot,t)\|_{L^{p_k}(B(x,\delta))}
\\
\leq&\left((8a_0)^{2^{k-m+1}-1}2^{\frac{s}{s-2}\left(2(2^{k-m}-1)+m2^{k-m+1}-k\right)}\right)^{\frac{1}{p_k}}
\max\left\{\sup_{t\geq0}y_{m-1}^{\frac{2^{k-m+1}}{p_k}}(t),\|u_0\|_{L^\infty(\mathbb R^N)}^{\frac{2^{k}}{p_k}},1\right\}.
\end{align*}
By letting $k\to\infty$, we obtain
\begin{align*}
&\|u(\cdot,t)\|_{L^\infty(B(x,\delta))}
\\
\leq& (8a_0)^{\frac1{2^{m-1}}}2^{\frac{s(1+m)}{(s-2)2^{m-1}}}\max\left\{\sup_{(x,t)\in\mathbb R^N\times[0,\infty)}\left(\int_{B(x,\delta)}u^{2^{m-1}+h}dy\right)^{\frac{1}{2^{m-1}}}, \|u_0\|_{L^\infty(\mathbb R^N)}, 1\right\}.
\end{align*}
Since $x\in\mathbb R^N$ and $t\in[0,\infty)$ are arbitrary and the boundedness of $$\sup_{(x,t)\in\mathbb R^N\times[0,\infty)}\left(\int_{B(x,\delta)}u^{2^{m-1}+h}dy\right)^{\frac{1}{2^{m-1}}}$$ for any $m\geq1$ is verified in Step $1$,  we have
\begin{align}
\|u\|_{L^\infty(\mathbb R^N\times[0,\infty))}\leq M\label{1.6}
\end{align}
with
\begin{align}
M=(8a_0)^{\frac1{2^{m-1}}}2^{\frac{s(1+m)}{(s-2)2^{m-1}}}
\max\left\{\sup_{(x,t)\in\mathbb R^N\times[0,\infty)}\left(\int_{B(x,\delta)}u^{2^{m-1}+h}dy\right)^{\frac{1}{2^{m-1}}}, \|u_0\|_{L^\infty(\mathbb R^N)}, 1\right\}.\label{M}
\end{align}
Therefore, the global boundedness of $u$ is obtained.

\textbf{Step 3. "Quasi"-maximum principles with small $\mu$'s}

We optimize $M$ introduced in \eqref{M} in the following by using the flexibility of $m$.
Notice that
\begin{align*}
C_2(N,\delta,r)=4\sqrt{2}\max\{(2\delta)^{N(\frac1s-\frac1r)},(2\delta)^{1-\frac Nr+\frac Ns}G(s,N)\}
\end{align*}
with $r$ given in \eqref{r} and $s$ in \eqref{s}. Moreover,
\begin{align*}
\lim_{m\to\infty}C_2\left(N,\delta,\frac{2^{m-1}+h+\alpha-1+\beta}{2^{m-1}+h}\right)
=4\sqrt{2}(2\delta)^{(\frac1s-1)N}\max\left\{1,2\delta G(s,N)\right\}.
\end{align*}
From \eqref{3.1}, if $\mu<\frac1{2C_1^2(N,\delta)(2^{m-1}+h)^2}$, denote $C_2:=C_2\left(N,\delta,\frac{2^{m-1}+h+\alpha-1+\beta}{2^{m-1}+h}\right)$ for simplicity,  we have
\begin{align*}
&\int_{B(x,\delta)}u^{2^{m-1}+h}dy
\\
\leq&2\left(2\left(\frac{1}{2^{m-1}+h-1}\right)^{\frac{2^{m-1}+h-1+\alpha-\beta}{(2^{m-1}+h-1+\alpha+\beta)(1-\frac2s)-2(\alpha-1)}}+
C_2^{\frac{s(2^{m-1}+h-1+\alpha-\beta)}
{(s-1)(2^{m-1}+h)-(\alpha+\beta-1)}}\right)^{\frac{(s-2)(2^{m-1}+h+\beta)-(2+s)(\alpha-1)}{s(\beta+1-\alpha)-2\beta}}
\\
&\cdot\left(\frac{4}{\kappa\eta}\right)^{\frac{(s-2)(2^{m-1}+h)-2(\alpha-1)}{s(\beta+1-\alpha)-2\beta}}
+2C_2^{\frac{s(2^{m-1}+h-1+\alpha-\beta)}
{(s-1)(2^{m-1}+h)-(\alpha+\beta-1)}}+(2\delta)^N(1+\|u_0\|^{2^{m-1}+h}_{L^\infty(\mathbb R^N)}):=H(m).
\end{align*}
Notice that
\begin{align}
&\lim_{m\to\infty}(H(m))^{\frac1{2^{m-1}}}\label{1.2}
\\
=&\max\left\{\left(4\sqrt{2}(2\delta)^{(\frac1s-1)N}\max\left\{1,2\delta G(s,N)\right\}\right)^{\frac{s(s-2)}{(s-1)(s(\beta+1-\alpha)-2\beta)}}
\left(\frac{4}{\kappa\eta}\right)^{\frac{s-2}{s(\beta+1-\alpha)-2\beta}},\|u_0\|_{L^\infty(\mathbb R^N)},1\right\}\nonumber
\end{align}
On the other hand, we obtain
\begin{align}
\lim_{m\to\infty}(8a_0)^{\frac{1}{2^{m-1}}}2^{\frac{s(1+m)}{(s-2)2^{m-1}}}=1.\label{3.19}
\end{align}
According to the above discussion, if we let $m$ go to infinity there will be no positive $\mu$ such that the maximum principle holds. However, we can get the following relaxed version of maximum principle. Namely, for arbitrary $K>1$, from \eqref{1.6}, due to \eqref{1.2} and \eqref{3.19}, there exists a large $m$ (which depends only on $K$) such that  for $\mu\in(0,\mu^*)$ with  $\mu^*=\frac{1}{2C_1^2(N,\delta)(2^{m-1}+h)^2}$, we have
\begin{align}\label{nochne_schaetzung2}
\|u\|_{L^\infty(\mathbb R^N\times[0,\infty))}\leq K\max\left\{1,\left(\frac{A}{\kappa}\right)^{\frac{s^*-2}{s^*(\beta+1-\alpha)-2\beta}},\|u_0\|_{L^\infty(\mathbb R^N)}\right\}
\end{align}
with
\begin{align*}
A=\frac{4\left(4\sqrt{2}\max\left\{1, \delta_0G(s^*,N)\right\}\right)^{\frac {s^*}{s^*-1}}}{\delta_0^N\eta},
\end{align*}
where $\delta$ is chosen as $\frac{\delta_0}{2}$ for small $\delta_0$ and $s$ can be chosen as $s^*$ with
\begin{align*}
s^*=\left\{\begin{array}{ll}
&+\infty,\quad N=1,2,
\\
&\frac{2N}{N-2},\quad N>2.
\end{array}\right.
\end{align*}
Theorem \ref{Th:1.1} is proved.

\section{Long time behavior (hair trigger type effect)}\label{sec:hair-trigger}

Now we consider the long time behavior of the solution of
\eqref{eq:1-1}-\eqref{eq:1-2}.
\begin{proposition}\label{Prop-3.1}
Under the assumptions of Theorem \ref{Th:1.2}, for all $t>0$, the function
$$F(x,t)=\int_{B(x,\delta)}h(u^\beta(y,t))dy$$
is nonnegative and satisfies
\begin{align}
\partial_tF(x,t)\leq \Delta F(x,t)-D(x,t)\label{4.4}
\end{align}
with
\begin{equation*}
h(s)=\left\{
\begin{array}{ll}
&\frac{s}{\beta}-\frac1{\kappa\beta}\ln s-\frac1{\kappa\beta}\left(1+\ln \kappa\right),\quad
\alpha=1,
\\
&\frac{s^{\frac{1+\beta-\alpha}{\beta}}}{1+\beta-\alpha}
-\frac{s^{\frac{1-\alpha}{\beta}}}{\kappa(1-\alpha)}+\kappa^{\frac{\alpha-1-\beta}{\beta}}\left(\frac{1}{1-\alpha}-\frac{1}{1+\beta-\alpha}\right),\quad
\alpha>1
\end{array}\right.
\end{equation*}
and $D(x,t)=\frac12\eta\mu\kappa(2\delta)^N\int_{B(x,\delta)}(\kappa^{-1}-u^\beta(y,t))^2dy$.
\end{proposition}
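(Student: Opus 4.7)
My plan is to use $h\circ u^\beta$ as a relative entropy vanishing at the target equilibrium $u\equiv\kappa^{-1/\beta}$, derive a pointwise entropy identity, and then integrate over $B(x,\delta)$. A unified formula for $h'$ valid in both cases $\alpha=1$ and $\alpha>1$ is
$$h'(s)=\frac{\kappa s-1}{\kappa\beta}\,s^{(1-\alpha-\beta)/\beta},$$
so $h'$ vanishes only at $s=\kappa^{-1}$ and changes sign from negative to positive there. The additive constants in the definition of $h$ are calibrated exactly so that $h(\kappa^{-1})=0$ (a direct substitution: for $\alpha>1$ the two non-constant terms at $s=\kappa^{-1}$ share the common exponent $-(1+\beta-\alpha)/\beta$). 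Hence $h\geq 0$ on $(0,\infty)$ and assumption $(A)$ together with the standing hypothesis $0\leq u\leq\kappa^{-1/\beta}$ gives $F\geq 0$.

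For the differential inequality I would differentiate $h(u^\beta)$ using the PDE and the chain rule, then use $\Delta u^\beta=\beta u^{\beta-1}\Delta u+\beta(\beta-1)u^{\beta-2}|\nabla u|^2$ and $\Delta h(u^\beta)=h'(u^\beta)\Delta u^\beta+h''(u^\beta)|\nabla u^\beta|^2$ to eliminate $\Delta u$ in favour of $\Delta h(u^\beta)$. The result is
$$\partial_t h(u^\beta)=\Delta h(u^\beta)-\beta u^{\beta-2}|\nabla u|^2\bigl[\beta u^\beta h''(u^\beta)+(\beta-1)h'(u^\beta)\bigr]+h'(u^\beta)\,\beta\mu u^{\alpha+\beta-1}(1-\kappa J*u^\beta).$$
A short calculation reduces the bracket to
$$\frac{s^{(1-\alpha-\beta)/\beta}}{\kappa\beta}\bigl[(\beta-\alpha)\kappa s+\alpha\bigr]$$
with $s=u^\beta$; since the linear expression $(\beta-\alpha)\kappa s+\alpha$ takes the value $\alpha\geq 1$ at $s=0$ and $\beta>0$ at $s=\kappa^{-1}$, it is non-negative throughout $[0,\kappa^{-1}]$, so the entire gradient correction is non-positive and can be dropped.

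The source term then simplifies dramatically. Using the explicit formula for $h'$, one verifies $h'(u^\beta)\beta u^{\alpha+\beta-1}=u^\beta-\kappa^{-1}=-w$ with $w(y):=\kappa^{-1}-u^\beta(y,t)\geq 0$, and $1-\kappa J*u^\beta=\kappa\,J*w$ because $\int J=1$. Hence pointwise
$$\partial_t h(u^\beta)\leq\Delta h(u^\beta)-\mu\kappa\,w(y)\,J*w(y).$$
Integrating over $B(x,\delta)$ and exchanging $\int_{B(x,\delta)}\Delta_y h(u^\beta)\,dy$ with $\Delta_x F$ (via the translation-invariance trick already used in the $L^p$ estimates of Section~\ref{sec:global-exist}) produces
$$\partial_t F(x,t)\leq\Delta F(x,t)-\mu\kappa\int_{B(x,\delta)}w(y)\,J*w(y)\,dy.$$
The stated dissipation $D(x,t)$ then follows from the lower bound $J\geq\eta$ on $B(0,\delta_0)$: choosing $\delta\leq\delta_0/2$ guarantees $J(y-z)\geq\eta$ for all $y,z\in B(x,\delta)$, and bounding $J*w(y)$ from below by the corresponding integral of $w$ over $B(x,\delta)$ yields the desired quadratic dissipation.

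\textbf{Main obstacle.} The principal technical point is the sign of the combined gradient correction in the pointwise identity: this is where the assumption $\alpha\geq 1$ together with $u^\beta\leq\kappa^{-1}$ are both essential, and without either the entropy method breaks down. The remaining algebraic simplification of the source term and the use of the uniform positivity of $J$ near the origin to produce $D$ are then essentially bookkeeping, and the inequality $\partial_t F\leq\Delta F-D$ with $F\geq 0$ is exactly the ingredient needed to drive the convergence argument of Theorem~\ref{Th:1.2}.
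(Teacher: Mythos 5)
Your construction of $h$, the verification that $h\geq 0$ with minimum at $s=\kappa^{-1}$, the pointwise identity obtained by the chain rule, the sign analysis of the gradient correction via the affine factor $(\beta-\alpha)\kappa s+\alpha$, and the rewriting of the source as $-\mu\kappa\,w\,J*w$ with $w=\kappa^{-1}-u^\beta$ all match the paper's argument (the paper implements the same computation by testing the PDE with $(u^{\beta-\alpha}-\kappa^{-1}u^{-\alpha})\varphi_\varepsilon$, which is exactly $h'(u^\beta)\beta u^{\beta-1}$ times a cutoff, and it additionally justifies strict positivity of $u$ via the Duhamel formula so that these test functions make sense). Up to that point your proof is sound.

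The final step, however, has a genuine gap. From $J\geq\eta$ on $B(0,\delta_0)$ and $\delta\leq\delta_0/2$ you get $J*w(y)\geq\eta\int_{B(x,\delta)}w(z)\,dz$ for $y\in B(x,\delta)$, hence
\begin{equation*}
\int_{B(x,\delta)}w(y)\,J*w(y)\,dy\ \geq\ \eta\Bigl(\int_{B(x,\delta)}w(z)\,dz\Bigr)^{2},
\end{equation*}
which is an $L^1$-type dissipation. The proposition requires the $L^2$-type quantity $\eta(2\delta)^N\int_{B(x,\delta)}w^2$, and Cauchy--Schwarz gives $(\int_B w)^2\leq(2\delta)^N\int_B w^2$ --- the \emph{wrong} direction; the reverse inequality fails, e.g.\ for $w$ concentrated on a small portion of $B(x,\delta)$. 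Nothing in the hypotheses makes $J$ a positive-definite kernel, so $\int wJ*w\geq c\int w^2$ cannot be obtained directly. The paper's resolution is precisely where it deviates from your plan: it does \emph{not} drop the negative gradient term $-\frac{\alpha}{\kappa}\int u^{-\alpha-1}|\nabla u|^2$. Instead it writes $w(z)=w(y)-(u^\beta(y)-u^\beta(z))$ inside the double integral, so that the leading piece is $-\int\!\!\int w(y)^2J(y-z)\,dz\,dy\leq-\eta(2\delta)^N\int w^2$, applies Young's inequality to the cross term, and controls the remainder $C(\varepsilon)\int\!\!\int(u^\beta(y)-u^\beta(z))^2J(y-z)\,dz\,dy$ by $C(\varepsilon)(2\delta)^2\int|\nabla u^\beta|^2$ via a change of variables; this last term is then absorbed into the retained gradient term using $u\leq\kappa^{-1/\beta}$, provided $\delta$ is chosen small depending on $\varepsilon,\mu,\kappa,\alpha,\beta$. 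Taking $\varepsilon=\tfrac12$ yields the stated $D(x,t)$. You need this absorption argument (and hence must keep the gradient dissipation you discarded) to reach the claimed conclusion.
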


\begin{proof} Noticing that
\begin{equation*}
h'(s)=\left\{
\begin{array}{ll}
&\frac1\beta-\frac{1}{\kappa\beta s},\quad
\alpha=1,
\\
&\frac1\beta \left(s^{\frac{1-\alpha}{\beta}}-\kappa^{-1} s^{\frac{1-\alpha-\beta}{\beta}}\right),\quad
\alpha>1
\end{array}\right.
\end{equation*}
and $h'(s)<0$ for $0<s<\kappa^{-1}$ and $h'(s)>0$ for $s>\kappa^{-1}$, we obtain that $h(s)\geq h(\kappa^{-1})=0$ and $F(x,t)$ is nonnegative.

\noindent
For the global solution $u$ satisfying $0\leq u(x,t)\leq\left(\frac1{\kappa}\right)^\frac1\beta$ for all $(x,t)\in\mathbb R^N\times[0,\infty)$, the positivity follows from the fact that
\begin{align*}
u(\cdot,t)&=G(\cdot,t)*u_0+\mu\int_0^t\left(u^\alpha(\cdot,s)(1-\kappa J*u^\beta(\cdot,s))
\right)*G(\cdot,t-s)ds
\\
&\geq G(\cdot,t)*u_0>0,
\end{align*}
with $G(x,t)=\frac{e^{-\frac{|x|^2}{4t}}}{(4\pi t)^{\frac N2}}$ the heat kernel.

\noindent
Test \eqref{eq:1-1} by $(u^{\beta-\alpha}-\kappa^{-1} u^{-\alpha})\varphi_\varepsilon$ with $\varphi_\varepsilon\in C_0^\infty(B(x,\delta))$, $\varphi_\varepsilon\to1$ in $B(x,\delta)$ as $\varepsilon\to0$. Integrating by parts over $B(x,\delta)$
we obtain
\begin{align}
&\quad\frac{\partial }{\partial t}\int_{B(x,\delta)}h(u^\beta)\varphi_\varepsilon dy\nonumber
\\
&=\int_{B(x,\delta)}\Delta u(u^{\beta-\alpha}-\kappa^{-1} u^{-\alpha})\varphi_\varepsilon dy+\mu\kappa\int_{B(x,\delta)}(u^\beta-\kappa^{-1})(\kappa^{-1}-J*u^\beta)\varphi_\varepsilon dy\notag
\\
&= \int_{B(x,\delta)}\Delta h(u^\beta)\varphi_\varepsilon dy
 -\frac{4(\beta-\alpha)}{(\beta-\alpha+1)^2}\int_{B(x,\delta)}|\nabla u^{\frac{\beta-\alpha+1}{2}}|^2\varphi_\varepsilon dy-\frac\alpha\kappa\int_{B(x,\delta)}u^{-\alpha-1}|\nabla u|^2\varphi_\varepsilon dy \notag
\\
&\quad+\mu\kappa\int_{B(x,\delta)}\int_{\mathbb R^N}(u^\beta(y,t)-\kappa^{-1})(\kappa^{-1}-u^\beta(z,t))J(z-y)\varphi_\varepsilon(y) dzdy.\label{4.1}
\end{align}
Taking $\varepsilon\to0$, we obtain
\begin{align}
\quad\frac{\partial }{\partial t}F(x,t)&=\Delta F(x,t)
-\frac{4(\beta-\alpha)}{(\beta-\alpha+1)^2}\int_{B(x,\delta)}|\nabla u^{\frac{\beta-\alpha+1}{2}}|^2 dy-\frac\alpha\kappa\int_{B(x,\delta)}u^{-\alpha-1}|\nabla u|^2dy \nonumber
\\
&\quad+\mu\kappa\int_{B(x,\delta)}\int_{\mathbb R^N}(u^\beta(y,t)-\kappa^{-1})(\kappa^{-1}-u^\beta(z,t))J(z-y) dzdy.\label{eq:dt-F}
\end{align}
Then for $\delta<\frac{\delta_0}{2},$ noticing $0<u\leq\kappa^{-\frac1\beta}$, then
\begin{align}
&\quad\int_{B(x,\delta)}\int_{\mathbb R^N}(u^\beta(y,t)-\kappa^{-1})(\kappa^{-1}-u^\beta(z,t))J(z-y)dzdy\notag
\\
&\leq\int_{B(x,\delta)}\int_{B(x,\delta)}(u^\beta(y,t)-\kappa^{-1})(\kappa^{-1}-u^\beta(z,t))J(z-y)dzdy\nonumber
\\
& =-\int_{B(x,\delta)}\int_{B(x,\delta)}(u^\beta(y,t)-\kappa^{-1})^2J(z-y)dzdy\nonumber
\\
&\quad+\int_{B(x,\delta)}
\int_{B(x,\delta)}(u^\beta(y,t)-\kappa^{-1})(u^\beta(y,t)-u^\beta(z,t))J(z-y)dzdy\nonumber
\\
&\leq-\int_{B(x,\delta)}\int_{B(x,\delta)}(u^\beta(y,t)-\kappa^{-1})^2J(z-y)dzdy
+\varepsilon\int_{B(x,\delta)}\int_{B(x,\delta)}(u^\beta(y,t)-\kappa^{-1})^2J(z-y)dzdy\nonumber
\\
&\quad+C(\varepsilon)\int_{B(x,\delta)}\int_{B(x,\delta)}(u^\beta(y,t)-u^\beta(z,t))^2J(z-y)dzdy\nonumber
\\
&\le -(1-\varepsilon)\eta(2\delta)^N\int_{B(x,\delta)}(u^\beta(y,t)-\kappa^{-1})^2dy\nonumber
\\
&\quad
+C(\varepsilon)\int_{B(x,\delta)}\int_{B(x,\delta)}(u^\beta(y,t)-u^\beta(z,t))^2J(z-y)dzdy\label{4.2}
\end{align}
and
\begin{align}
&\quad\int_{B(x,\delta)}\int_{B(x,\delta)}(u^\beta(y,t)-u^\beta(z,t))^2J(z-y)dzdy\label{4.5}
\\
\nonumber&\leq\int_{B(x,\delta)}\int_{B(x,\delta)}\left|\int_0^1\nabla u^\beta(y+\theta(z-y),t)d\theta\right|^2|z-y|^2 J(z-y)dzdy
\\
\nonumber&\leq\int_{B(x,\delta)}\int_{B(x,\delta)}\int_0^1\left|\nabla u^\beta(y+\theta(z-y),t)\right|^2|z-y|^2 J(z-y)d\theta dzdy.
\end{align}
Changing the variables $y'=y+\theta(z-y)$,  $z'=z-y$, then
\begin{equation*}
\left|\begin{array}{cc}
\frac{\partial y'}{\partial y} & \frac{\partial y'}{\partial z}\\
\frac{\partial z'}{\partial y}& \frac{\partial z'}{\partial z}
\end{array}\right|=1.
\end{equation*}
For any  $\theta\in[0,1]$, $y\in B(x,\delta)$,  $z\in B(x,\delta)$, we have $y'\in B((1-\theta)x+\theta z,(1-\theta)\delta)$, $z'\in B(x-y,\delta)$.
Noticing $B((1-\theta)x+\theta z,(1-\theta)\delta)\subseteq B(x,\delta)$ and $B(x-y,\delta)\subseteq B(0,2\delta)$, we obtain
\begin{align}
&\int_{B(x,\delta)}\int_{B(x,\delta)}\int_0^1\left|\nabla u^\beta(y+\theta(z-y),t)\right|^2|z-y|^2 J(z-y)d\theta dzdy\nonumber
\\
\leq&\int_0^1d\theta\int_{B(x,\delta)}\int_{B(0,2\delta)}\left|\nabla u^\beta(y',t)\right|^2|z'|^2 J(z') dz'dy'\nonumber
\\
=&(2\delta)^2\int_{B(x,\delta)}\left|\nabla u^\beta(y',t)\right|^2dy'\label{4.3}
\\
\leq&\frac{4}{\kappa^{2+(\alpha-1)/\beta}}\delta^2\beta^2\int_{B(x,\delta)}u^{-\alpha-1}(y,t)\left|\nabla u(y,t)\right|^2dy.\nonumber
\end{align}
Combing \eqref{4.5}, \eqref{4.3} and \eqref{4.2}, then inserting into \eqref{eq:dt-F}, if $\alpha\leq\beta$, for $0<\delta<\min\{\frac{\delta_0}{2},\sqrt{\frac{\alpha\kappa^{(\alpha-1)/\beta}}{4\mu C(\varepsilon)\beta^2}}\}$, we obtain
\begin{align}
\frac{\partial }{\partial t}F(x,t)
\leq\Delta F(x,t)-(1-\varepsilon)\eta(2\delta)^N\mu\kappa\int_{B(x,\delta)}(\kappa^{-1}-u^\beta(y,t))^2dy.\label{4.8}
\end{align}
For $\alpha>\beta$, noticing that
$$
\frac{4(\alpha-\beta)}{(\beta-\alpha+1)^2}\int_{B(x,\delta)}|\nabla u^{\frac{\beta-\alpha+1}{2}}(y,t)|^2 dy\leq(\alpha-\beta)\kappa^{-1}\int_{B(x,\delta)}u^{-\alpha-1}(y,t)|\nabla u(y,t)|^2dy,$$
we can also verify \eqref{4.8} for $\delta<\min\{\frac{\delta_0}{2},\sqrt{\frac{\kappa^{(\alpha-1)/\beta}}{4C(\varepsilon)\beta\mu}}\}$ in \eqref{eq:dt-F}. Then taking  $\varepsilon=\frac12$ in \eqref{4.8},
we obtain \eqref{4.4}.
\end{proof}
%\noindent
Now we are in a position to prove Theorem \ref{Th:1.2}.

\noindent
{\bf Proof of Theorem \ref{Th:1.2}.} From \eqref{4.4}, we have
$$F(x,t)\leq \frac{1}{(4\pi t)^{\frac{N}{2}}}\int_{\mathbb R^N}e^{-\frac{|x-y|^2}{4t}}F_0(y)dy-\int_0^t\frac{1}{(4\pi (t-s))^{\frac{N}{2}}}\int_{\mathbb R^N}e^{-\frac{|x-y|^2}{4(t-s)}}D(y,s)dyds,$$
from which we obtain
$$\int_0^t\int_{\mathbb R^N}\frac{e^{-\frac{|x-y|^2}{4(t-s)}}}{(4\pi (t-s))^{\frac{N}{2}}}D(y,s)dyds\leq \|F_0\|_{L^\infty(\mathbb R^N)}.$$
Due to the fact that $u$ is a classical solution, we have that
$$\displaystyle\int_{\mathbb R^N}\frac{e^{-\frac{|x-y|^2}{4(t-s)}}}{(4\pi (t-s))^{\frac{N}{2}}}D(y,s)dy\in C^{2,1}(\mathbb R^N\times (0,\infty]),$$ which implies that for all $x\in\mathbb R^N$, the following limit holds:
$$\lim_{t\to\infty}\lim_{s\to t}\int_{\mathbb R^N}\frac{e^{-\frac{|x-y|^2}{4(t-s)}}}{(4\pi (t-s))^{\frac{N}{2}}}D(y,s)dy=0,$$
or equivalently,
$$
\lim_{t\to\infty}\lim_{s\to t}\int_{\mathbb R^N}\frac{e^{-\frac{|x-y|^2}{4(t-s)}}}{(4\pi (t-s))^{\frac{N}{2}}}\int_{B(y,\delta)}(\kappa^{-1}-u^\beta(z,s))^2dzdy=0.
$$
Together with the fact that the heat kernel converges to delta function as $s\rightarrow t$, we have
that for any $x\in\mathbb R^N$,
$$\lim_{t\to\infty}\int_{B(x,\delta)}(\kappa^{-1}-u^\beta(y,t))^2dy=0,$$
from which we can obtain the uniform convergence of solutions in $B(x,\delta)$, namely
$$\|u^\beta-\kappa^{-1}\|_{L^\infty(B(x,\delta))}\to0$$
as $t\to\infty$. Then for any compact set in $\mathbb R^N$, by finite covering, we obtain that $u$ converges to $\kappa^{-\frac1\beta}$ uniformly in that compact set, which means that $u$ converges locally uniformly to $\kappa^{-\frac1\beta}$ in $\mathbb R^N$ as $t\to\infty$.
Theorem \ref{Th:1.2} is proved.

\section{Numerical simulations and discussion}\label{sec:num-discussion}

In Sections \ref{sec:global-exist} and \ref{sec:hair-trigger}, we established global boundedness and the hair trigger effect of solutions to the nonlinear nonlocal reaction-diffusion initial value problem
\eqref{eq:1-1}, \eqref{eq:1-2}. The obtained results provide information about the relationship between the exponents $\alpha $ (weak Allee effect) and $\beta $ (overcrowding effect) in \eqref{eq:1-1}. The deduction of \eqref{eq:1-1} performed in the Appendix suggests that the whole dynamics is controlled by the interaction strengths $\alpha ,\beta $, the spatial dimension $N$, the kernel $J$, and the population carrying capacity encoded via nondimensionalization in the strength of the source term, thus on the constants $\mu $ and $\kappa $ below %(at least for a uniform velocity distribution) by the speed of individuals, which
(for simplicity we assumed the speed of individuals to be constant).

The constant $\alpha^*$ offering an upper bound for the exponent $\alpha $ was found here to depend on $\beta $ and $N$; moreover, it is uniform with respect to the kernel $J$. By introducing the nonlocal competition term $J*u$, the  $\alpha$-interval $(1,1+\frac{2}{N})$ leading to blow-up for $\pd ut=\Delta u+u^\alpha$ is turned into an interval providing global existence for $\pd ut=\Delta u+\mu u^\alpha(1-\kappa J*u)$. Furthermore, by introducing the exponent $\beta>1$ to the nonlocal competition term, the $\alpha$-interval  $[1,1+\frac{2}{N})$ ensuring global existence is further enlarged to $\alpha\in[1,1+\frac{2\beta}{N})$, for which the upper bound is increasing with $\beta$.

%If the unfavorable crowding is dominant, but not too much so, then increasing $N$ renders $\alpha $ smaller. For fixed $\alpha$, increasing the space dimension $N$ will alow larger values of $\beta $ in order to ensure boundedness of the solution. This effect is rather intuitive: a higher dimension means giving the population more 'space' to grow and spread, which reduces the pressure due to (moderate) competition.

%\noindent
%In the following we also provide some clues for further studies as suggested by numerical simulations. }
%%%{\cb For a sufficiently strong intraspecific competition (modulated by the overcrowding exponent $\beta >1$) the $\alpha $-interval for global existence is shrinking, thus restraining the Allee effect.} {\cmg This is for me somewhat unexpected; roughly speaking I would have thought that a larger $\beta$ means stronger decay in the competition part, so that $\alpha $ can afford getting a bit larger in the balance between Allee effect and overcrowding. I did not account thereby for $\mu $ and $\kappa$, but the former multiplies the whole source term, while the latter is supposed to be arbitrary in $(0,k^{**})$, thus large $\kappa$'s would only enhance the decay due to overcrowding. What do I oversee?}

Next, by numerical simulations, we also provide some clues for further investigations on the effect of the kernel $J$ on the solution behavior.
In order to study the influence of the kernel $J(x)$ on the global boundedness and the hair trigger effect, we introduce a parameter $\sigma$ and consider the following equation with the rescaled kernel $J_{\sigma}$:
\begin{align}\label{eq:simulated0}
&u_t=u_{xx}+u^{\alpha}(1-\kappa J_{\sigma}*u^\beta),
\end{align}
where $J_\sigma(x)=\frac{1}{\sigma}J(x/\sigma)$ is the $\sigma$-parametrized kernel satisfying \eqref{eq:1-3}.
Up to a rescaling, \eqref{eq:simulated0} becomes
the following $(\alpha, \beta, \mu,\kappa)$-parametrized equation
\begin{align}\label{eq:simulated}
&u_t=u_{xx}+\mu u^{\alpha}(1-\kappa J*u^\beta),
\end{align}
with $\mu=\sigma^2$.

\subsection{Simulations related to the effect of the kernel on the global boundedness}\label{sec:GB}

Following the algorithm in \cite{5} we perform numerical simulations in 1D for the initial value problem \eqref{eq:simulated} and test several combinations of $\mu, \alpha$. %\footnote{Thereby, in view of the scaling performed in the Appendix, $\mu $ is not allowed to become too large, or another scaling would be needed. We let this rate vary up to $\mu =100$, which is still in line with our scaling, for which $\eps $ was supposed to be $10^{-2}$ or smaller.}
For $J$ we choose either the uniform kernel $J(x)=\frac{1}{2}\mathds 1_{[-1,1]}$ or the so-called logistic kernel $J(x)=\frac{1}{2+e^x+e^{-x}}$, see e.g. \cite{davier}. In order to handle the problem on the whole $\R$ we consider as in \cite{5} a bounded interval $(x_l,x_r)\subset \R$ and set $u\equiv 1$ on $(-\infty,x_l]$, and $u\equiv 0$ on $[x_r,\infty )$. We take the initial condition

\begin{tabular}{cc}
\parbox{10cm}{\begin{equation*}
u_0(x)=\left \{\begin{array}{cc}
1,&\text{for }x\le x_l\\
e^{-(x-x_l)^2},&\text{for }x_l<x\le 0\\
e^{-x_l^2}(1-\frac{x}{x_r}),&\text{for }0<x\le x_r\\
0,&\text{for }x>x_r.
\end{array}\right .
\end{equation*}}&\parbox{6cm}{\includegraphics[width=0.35\textwidth]{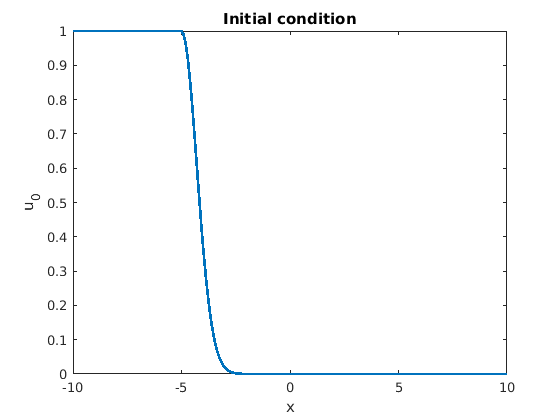}}

\end{tabular}

%\noindent
In our simulations $x_l=-5$, $x_r=5$, and $\beta=\kappa =1$. Figure \ref{fig:1} shows solution profiles of $u$ for $J$ being the uniform kernel with several different values of $\mu$ and $\alpha$.

%\footnote{We need simulations for the case $\mu=1$ in Figure 1. {\cmg I only mentioned the fact that the solution stays bounded for $\mu=1$, without showing the pictures. I can include them if you want, but all look like Subfigure \ref{fig:0-1}}}

\begin{figure}[h!]
	\hspace*{-2cm}	
	\begin{tabular}{ccc}
\parbox{5.2cm}{
	\begin{subfigure}[b]{.56\textwidth}
		\begin{center}
			\includegraphics[width=0.57\textwidth]{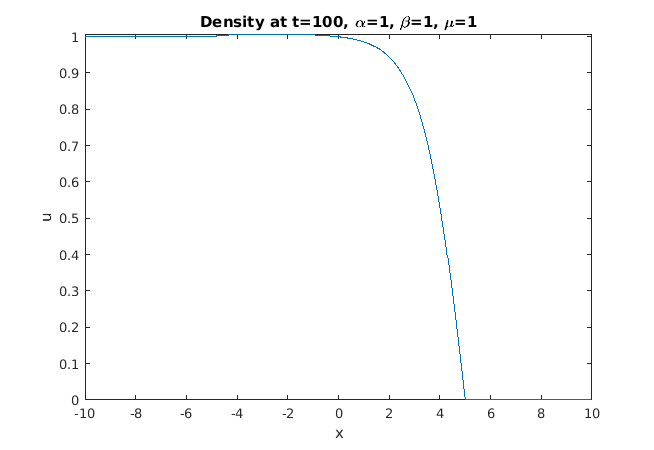}
			\caption{$\alpha =1$}\label{fig:0-1}
\end{center}\end{subfigure}}&\parbox{5.2cm}{
	\begin{subfigure}[b]{.56\textwidth}
		\begin{center}				
		%	\includegraphics[width=0.57\textwidth]{plots_LCS/D0_mu1_alpha2p7_beta1_uniform_t500}
		%	\caption{$\alpha =2.7$, no diffusion.}\label{fig:0-2}
			\includegraphics[width=0.57\textwidth]{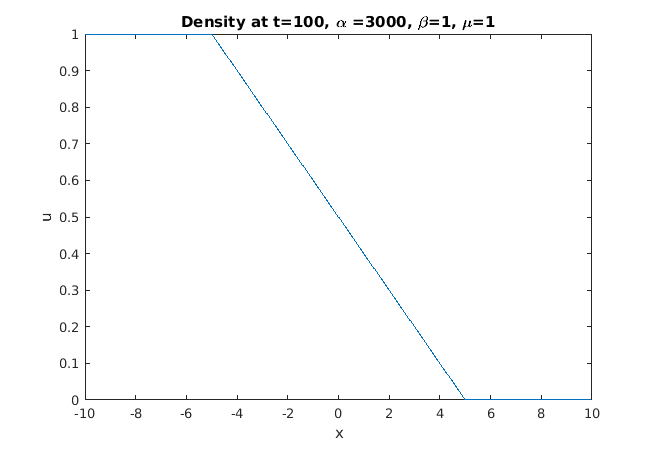}
			\caption{$\alpha =3000$}\label{fig:0-2}
\end{center}\end{subfigure}}&
\parbox{5.2cm}{
	\begin{subfigure}[b]{.57\textwidth}
		\begin{center}
			\includegraphics[width=0.56\textwidth]{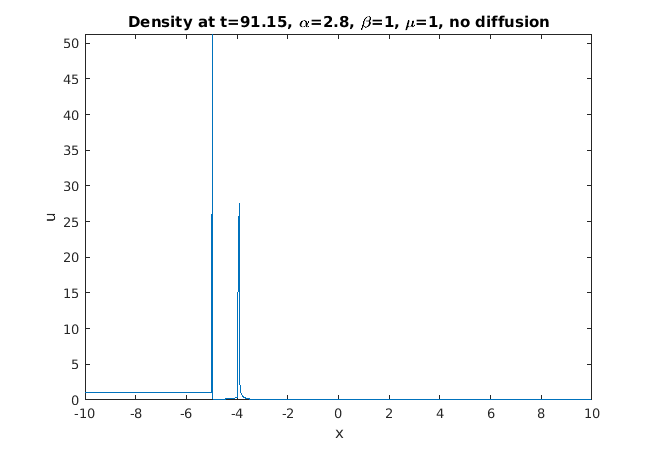}
			\caption{$\alpha =2.8$, no diffusion.}\label{fig:0-3}	
\end{center}\end{subfigure}}	\\	
		\parbox{5.2cm}{
			\begin{subfigure}[b]{.56\textwidth}
				\begin{center}
					\includegraphics[width=0.57\textwidth]{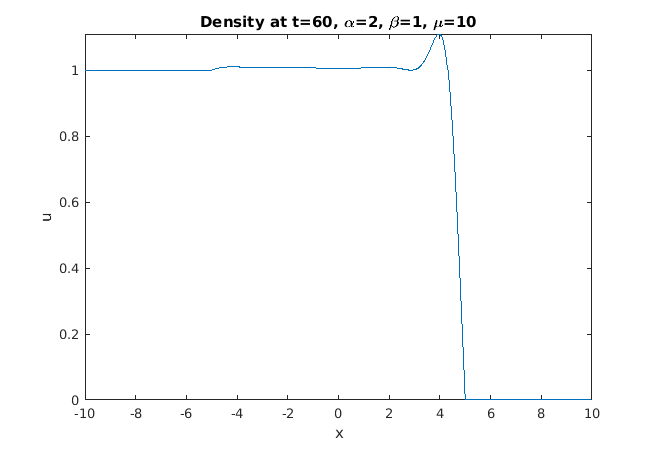}
					\caption{$\alpha =2$}\label{fig:1-1}
		\end{center}\end{subfigure}}&\parbox{5.2cm}{
			\begin{subfigure}[b]{.56\textwidth}
				\begin{center}				
					\includegraphics[width=0.57\textwidth]{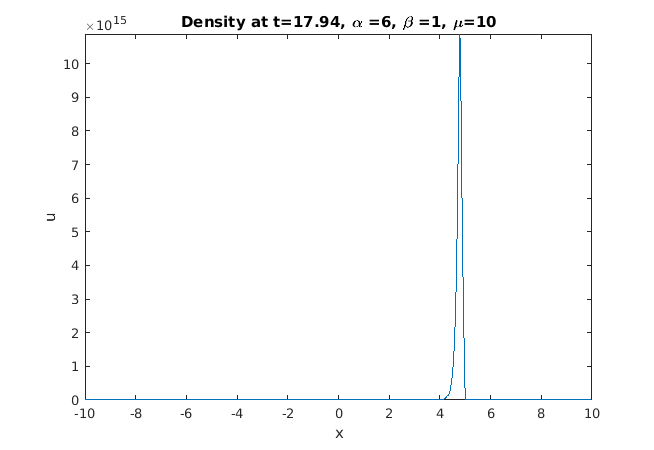}
					\caption{$\alpha =6$}\label{fig:1-2}
		\end{center}\end{subfigure}}&
		\parbox{5.2cm}{
			\begin{subfigure}[b]{.57\textwidth}
				\begin{center}
					\includegraphics[width=0.56\textwidth]{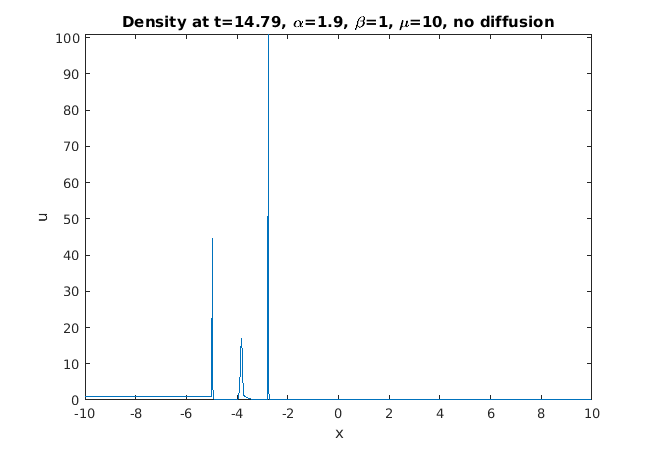}
					\caption{$\alpha =1.9$, no diffusion.}\label{fig:1-3}	
		\end{center}\end{subfigure}}	\\
		\parbox{5.2cm}{
			\begin{subfigure}[b]{.56\textwidth}
				\begin{center}
					\includegraphics[width=0.57\textwidth]{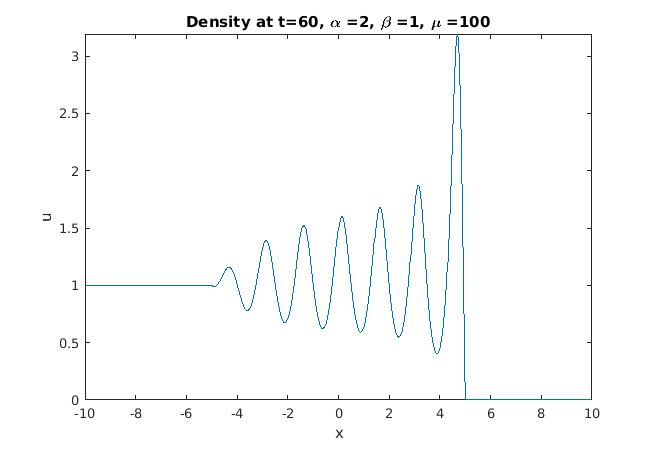}
					\caption{$\alpha =2$}\label{fig:1-4}
		\end{center}\end{subfigure}}&\parbox{5.2cm}{
			\begin{subfigure}[b]{.56\textwidth}
				\begin{center}				
					\includegraphics[width=0.57\textwidth]{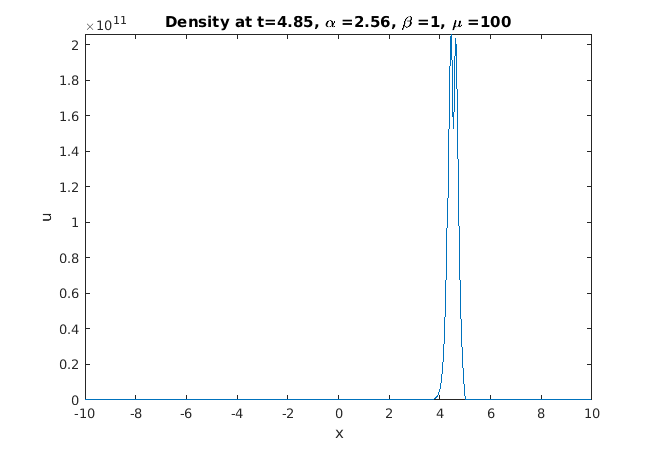}
					\caption{$\alpha =2.56$}\label{fig:1-5}
		\end{center}\end{subfigure}}&\parbox{5.2cm}{
			\begin{subfigure}[b]{.56\textwidth}
				\begin{center}				
					\includegraphics[width=0.57\textwidth]{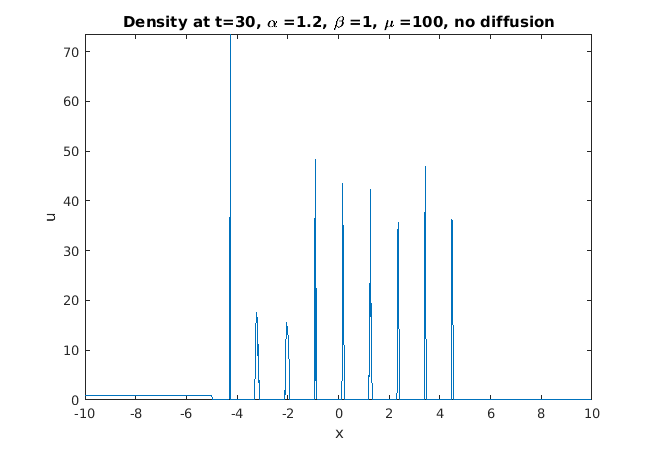}
					\caption{$\alpha =1.2$, no diffusion}\label{fig:1-6}
		\end{center}\end{subfigure}}			
	\end{tabular}
	\begin{center}
		\caption{Simulation results for \eqref{eq:simulated} with $\beta =\kappa=1$, $\mu=1$ (upper row), $\mu =10$ (middle row), $\mu =100$ (lower row), $J$ uniform kernel, and different values of $\alpha$. Subfigures \ref{fig:0-3}, \ref{fig:1-2}, \ref{fig:1-3}, \ref{fig:1-5} and \ref{fig:1-6} show the solution at the time moments just before it blows up.}\label{fig:1}
	\end{center}
\end{figure}

%\vspace*{-1cm}

%\noindent
Our numerical experiments show that the solution stays bounded for any values of $\alpha $ when $\mu $ is rather small, e.g. for $\mu =1$. The corresponding solution profiles look like those in Subfigures \ref{fig:0-1}, \ref{fig:0-2}; the only change noticed for different concrete values of $\alpha$ is in the curve connecting the levels $u=1$ and $u=0$ at $x=x_l$ and $x=x_r$, respectively: For small $\alpha $\footnote{less than approximately $\alpha =15$} this is a  shoulder curve which can slightly and transiently exceed the level $u=1$ (see Subfigure \ref{fig:0-1}), while for $\alpha $ large it becomes a straight line, as in Subfigure \ref{fig:0-2}. For $\mu =10$ the solution is still bounded, even if $\alpha $ exceeds the upper bound (here $\alpha ^*=2$) obtained in our analysis. It first became unbounded for $\alpha =6$. We also explored (still for $\mu =10$) the global boundedness of the solution in the case with no diffusion, for which there are no theoretical results: In this situation the solution was found to blow up already for $\alpha =1.9$, see Subfigure \ref{fig:1-3}. This indicates that neglecting diffusion leads to insufficient dampening of the growth, which triggers unboundedness even for smaller values of $\alpha$. Moreover, in this case the combination $\mu =1$ and $\alpha \ge 2.8$ leads to blow-up as well, see Subfigure \ref{fig:0-3}.

%\footnote{Is this sentence contradict with the first sentence? There is no figure for $\mu=1$ {\cmg No contradiction, the text mentioning the case without diffusion was missing from this version. I'm not sure whether it is necessary to include further blow-up (or just-before-blow-up) pictures here for $\mu =1$ and $\alpha \ge 2.7$; they look quite alike, I guess mentioning the observation would be enough.}}

%\noindent
Simulation results for $\mu=100$ are shown in Subfigures \ref{fig:1-4}-\ref{fig:1-6}. This increase of the interaction rate reduces the range of $\alpha $ in which there is no blow-up occurring. Our tests showed that the latter already happens for $\alpha =2.56$, while the solution stays bounded for $\alpha$ below that value, although it can exhibit a highly oscillatory behavior, occasionally with very large and locally concentrated aggregates, as shown in Subfigures \ref{fig:1-4}, \ref{fig:1-6}. Such peaks of the solution can suddenly emerge, grow very fast, and then stabilize or drop back to small values. Neglecting diffusion has for $\mu=100$ the same effect as above for $\mu =10$: It leads to blow-up of the solution, and this already for even smaller values of $\alpha $.

\begin{figure}[h!]
	\hspace*{-2cm}	
	\begin{tabular}{ccc}
		\parbox{5.2cm}{
			\begin{subfigure}[b]{.56\textwidth}
				\begin{center}
					\includegraphics[width=0.57\textwidth]{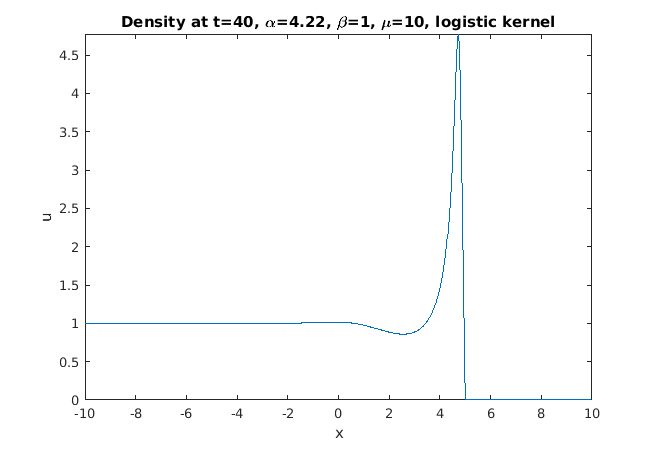}
					\caption{$\alpha =4.22,\ \mu =10$}\label{fig:2-1}
		\end{center}\end{subfigure}}&\parbox{5.2cm}{
			\begin{subfigure}[b]{.56\textwidth}
				\begin{center}				
					\includegraphics[width=0.57\textwidth]{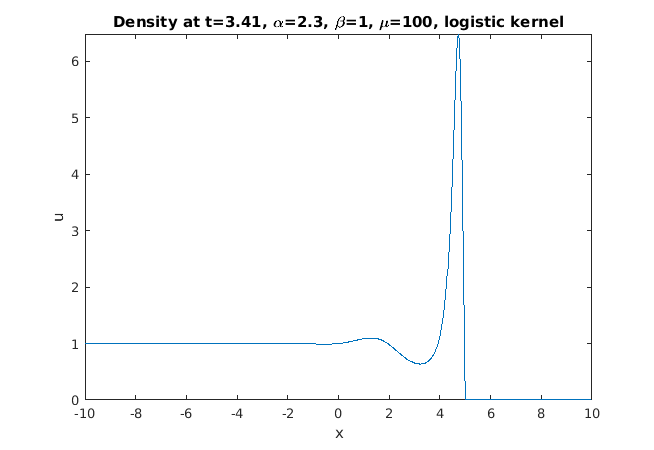}
					\caption{$\alpha =2.3,\ \mu=100$}\label{fig:2-2}
		\end{center}\end{subfigure}}&
		\parbox{5.2cm}{
			\begin{subfigure}[b]{.57\textwidth}
				\begin{center}
					\includegraphics[width=0.56\textwidth]{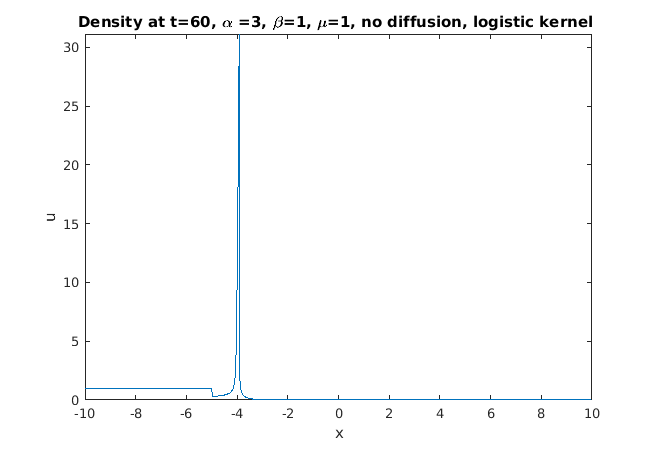}
					\caption{$\alpha =3,\ \mu =1$, no diffusion.}\label{fig:2-3}	
		\end{center}\end{subfigure}}					
	\end{tabular}
	\begin{center}
		\caption{Simulation results for \eqref{eq:simulated} with $J$ logistic kernel, $\beta=\kappa=1$.}\label{fig:2}
	\end{center}
\end{figure}

%\noindent
Figure \ref{fig:2} illustrates solution profiles of $u$ for $J$ being the logistic kernel, $\beta=\kappa=1$, and different $\mu $ and $\alpha $ combinations. The solution behavior is similar to that for $J$ being uniform, but now blow-up occurs (for the same values of $\mu$) at lower values of $\alpha$: For $\mu =10$ the solution explodes for $\alpha \ge 4.23$, and in case $\mu =100$ for $\alpha \ge 2.3$. Subfigures \ref{fig:2-1} and  \ref{fig:2-2} show solution profiles for $\mu =10$ and $\alpha =4.22$ (no blow-up) and for $\mu =100$ and $\alpha=2.3$ (at the time just before blow-up), respectively. The shapes of the solutions are quite alike, just with higher maxima for $\mu$ increasing. In all simulations with this type of kernel there are far less oscillations and peaks, which get damped rather fast, only one dominant aggregate remaining.
Interestingly, for $D=0$ and $\mu =1$ the first exponent for which blow-up occurs is $\alpha =3.1$, which is larger than in the case where $J$ was uniform, compare Subfigure \ref{fig:0-3}. %\footnote{We need the figure for $\mu=1$}

%\begin{figure}[h!]
%	\hspace*{-2cm}	
%	\begin{tabular}{ccc}
%		\parbox{5.2cm}{
%			\begin{subfigure}[b]{.56\textwidth}
%				\begin{center}
%					\includegraphics[width=0.57\textwidth]{plots_LCS/D1_mu10_alpha4p22_beta1_logistic_bded}
%					\caption{$\alpha =4.22,\ \mu =10$}\label{fig:2-1}
%		\end{center}\end{subfigure}}&\parbox{5.2cm}{
%			\begin{subfigure}[b]{.56\textwidth}
%				\begin{center}				
%					\includegraphics[width=0.57\textwidth]{plots_LCS/D1_mu100_alpha2p3_beta1_logistic_before_blowup}
%					\caption{$\alpha =2.3,\ \mu=100$}\label{fig:2-2}
%		\end{center}\end{subfigure}}&
%		\parbox{5.2cm}{
%			\begin{subfigure}[b]{.57\textwidth}
%				\begin{center}
%					\includegraphics[width=0.56\textwidth]{plots_LCS/D0_mu1_alpha3_beta1_bded_logistic}
%					\caption{$\alpha =3,\ \mu =1$, no diffusion.}\label{fig:2-3}	
%		\end{center}\end{subfigure}}					
%	\end{tabular}
%	\begin{center}
%		\caption{Simulation results for \eqref{eq:simulated} with $J$ logistic kernel, $\beta=1$.}\label{fig:2}
%	\end{center}
%\end{figure}

%\vspace*{-1cm}
\subsection{Simulations for the influence of the kernel on the hair trigger effect and pattern formation}\label{sec:HTE}
%\footnote{$\mu$ should palys a role in determining the diversity of Patterns}
%Comparing the plots in Figures \ref{fig:1} and \ref{fig:2} (in particular Figure \ref{fig:1-3} with Figure \ref{fig:2-1} and Figure \ref{fig:2-1} with Figure \ref{fig:2-2}) shows that in fact not only the values of $\alpha,\beta$, but also that of $\mu $ and the shape of the kernel $J$ dramatically influence the solution behavior.

In order to test the hair trigger effect and to get some insight into the qualitative behavior of the solution we also performed numerical simulations for different values of $\mu$ and two different kernels.
%\textcolor{blue}{A suggested initial data could be}
%\begin{tabular}{cc}
%\parbox{10cm}{\begin{equation*}
%u_0(x)=\frac{1}{4}+\frac{1}{8}\sin x,\quad \text{for }x_l<x\le x_r,
%%\left \{\begin{array}{cc}
%%\frac{1}{4}+\frac{1}{8}\sin x_l,&\text{for }x\le x_l,\\
%%\frac{1}{4}+\frac{1}{8}\sin x,&\text{for }x_l<x\le x_r,\\
%%\frac{1}{4}+\frac{1}{8}\sin x_r,&\text{for }x>x_r
%%\end{array}\right.
%\end{equation*}
%constantly prolonged to the left and to the right with the values in $x_l$ and $x_r$, respectively,
%}&\parbox{6cm}{\includegraphics[width=0.37\textwidth]{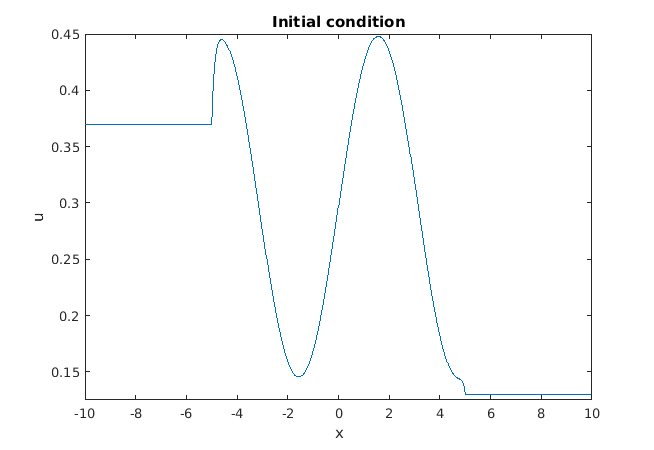}}
%\end{tabular}\\
%{\cmg As we have verified hair trigger effect for small $\kappa$ values in Theorem \ref{Th:1.2}. Here we consider relatively large $\kappa$ and take $\kappa=\beta=1$ for example,  consider several combinations of the parameters $\alpha$ and $\mu$.}

\noindent
We start with the case $\kappa=\beta=1$ and several combinations of the parameters $\alpha$ and $\mu$. As before, $J$ is taken to be the uniform or the logistic kernel.
 The results are shown in Figures \ref{fig:3} and \ref{fig:4} for the uniform kernel and for the logistic kernel, respectively. From Subfigure \ref{fig:3-1} we notice that for $\mu=1$ and $\alpha <\alpha^*$, the solution converges locally uniformly to $1$, which is the hair trigger effect. Subfigures  \ref{fig:3-2}, \ref{fig:3-3} show that for $\mu=50$ and $150$, respectively, the solution  forms different patterns, larger $\mu$ values leading to more oscillatory patterns. These facts suggest that the smallness assumption on $\mu$ for hair trigger effect is necessary. A similar behavior is observed for $\alpha$ coinciding with or being slightly beyond $\alpha^*$, while the solution explodes for $\alpha \ge \hat \alpha >\alpha^*=2$, the critical value $\hat \alpha $ depending as before on the choice of $\mu $, compare Subfigures \ref{fig:3-4} and \ref{fig:3-5}.
%Moreover, the solution exhibits oscillations around $u=1$ for $\mu $ large enough; these have higher frequency and amplitude for larger values of %$\mu$ and get quickly damped for smaller values, compare {\cb Subfigures \ref{fig:3-3}, \ref{fig:3-4} and \ref{fig:3-2}}.
Allowing for more frequent oscillations in the initial condition leads to the same behavior, however with singularities occurring at later times. Simulations with the same initial condition, but with $J$ being the logistic kernel are illustrated in Figure \ref{fig:4}. The hair trigger effect for  small $\mu$ values is similar, however the solution exhibits less oscillations, which means that the shape of patterns is strongly influenced by the choice of the kernel. Furthermore, for the logistic kernel the solution blows up earlier, and for smaller values of $\alpha $: Subfigure \ref{fig:4-4} shows the solution profile for $\alpha =3.8$ shortly before blow-up; compare with Subfigure \ref{fig:3-5}.

%\begin{figure}[h]
%	\centering
%	\includegraphics[width=0.67\textwidth]{plots_LCS/IC_neuest_sin_given}
%	\caption{Initial condition for the simulations in Subsection \ref{sec:HTE}.}\label{fig:neue-ICs}
%\end{figure}

\begin{figure}[h!]
	\hspace*{-2cm}	
	\begin{tabular}{cc}
		\parbox{7.7cm}{
			\begin{subfigure}[b]{.67\textwidth}
			\begin{center}
				\includegraphics[width=0.77\textwidth]{plots_LCS/IC_neuest_sin_given}	
				\caption{Initial condition}\label{fig:3-0}
					\end{center}\end{subfigure}}&\parbox{7.7cm}{\begin{subfigure}[b]{.67\textwidth}
				\begin{center}
				\includegraphics[width=0.77\textwidth]{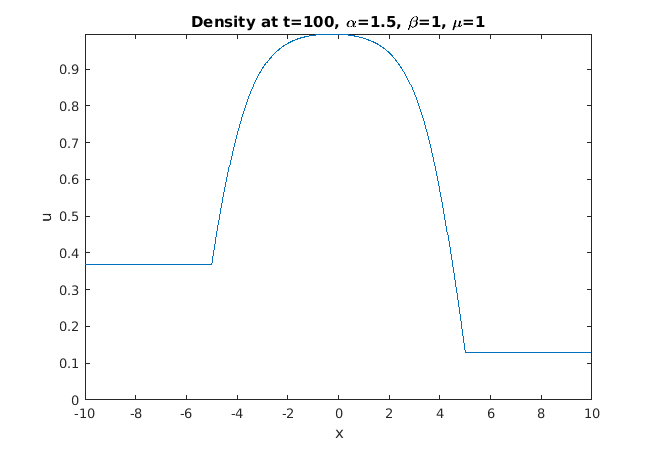}
				\caption{$\alpha =1.5,\ \mu =1$}\label{fig:3-1}	
			\end{center}\end{subfigure}}\\
					\parbox{7.7cm}{
				\begin{subfigure}[b]{.67\textwidth}
				\begin{center}
					\includegraphics[width=0.77\textwidth]{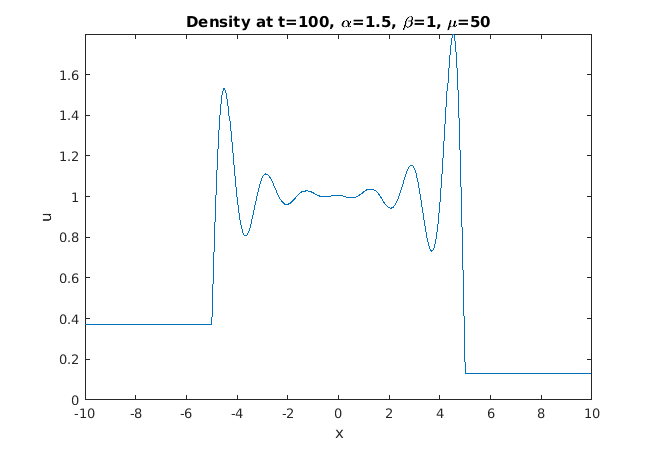}
					\caption{$\alpha =1.5,\ \mu =50$}\label{fig:3-2}	
		\end{center}\end{subfigure}}&
\parbox{7.7cm}{
	\begin{subfigure}[b]{.67\textwidth}
		\begin{center}
			\includegraphics[width=0.77\textwidth]{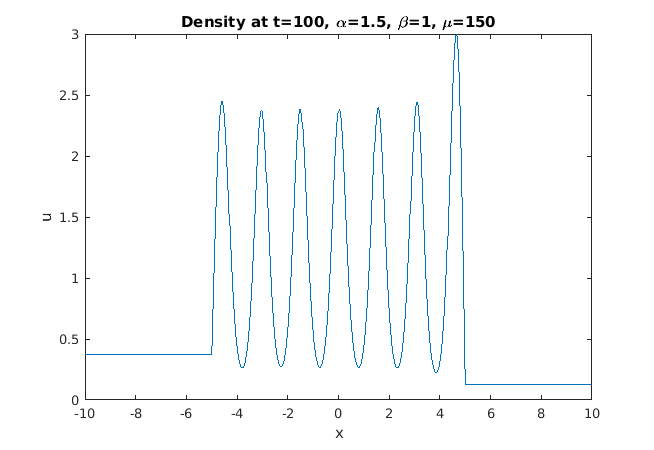}
			\caption{$\alpha =1.5,\ \mu =150$}\label{fig:3-3}
\end{center}\end{subfigure}}\\
\parbox{7.77cm}{
	\begin{subfigure}[b]{.67\textwidth}
		\begin{center}				
			\includegraphics[width=0.77\textwidth]{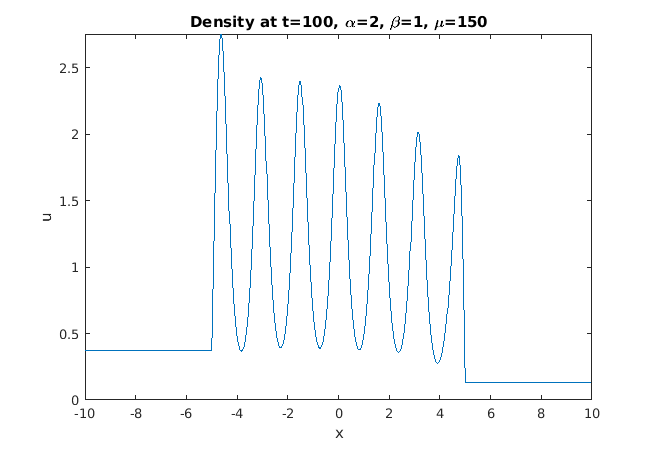}
			\caption{$\alpha =2,\ \mu=150$}\label{fig:3-4}
\end{center}\end{subfigure}}&\parbox{7.7cm}{\begin{subfigure}[b]{.67\textwidth}
\begin{center}
	\includegraphics[width=0.77\textwidth]{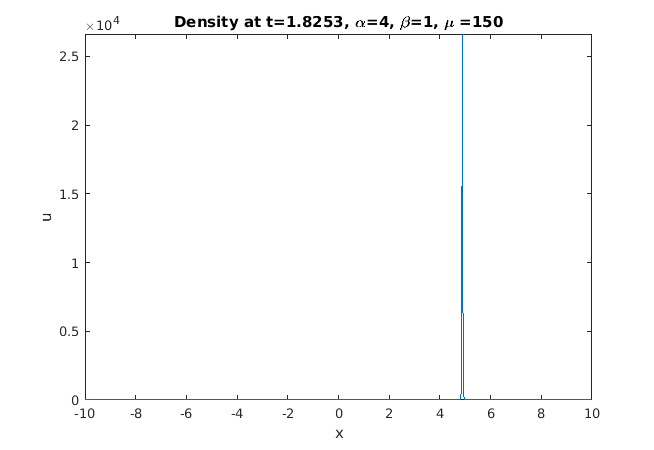}
	\caption{$\alpha =4,\ \mu =150$}\label{fig:3-5}
\end{center}\end{subfigure}}
		\end{tabular}
\begin{center}
\caption{Initial condition and simulation results for \eqref{eq:simulated} with $J$ uniform kernel, $\beta=\kappa=1$.}\label{fig:3}
\end{center}
\end{figure}

\begin{figure}[h!]
	\hspace*{-2cm}	
	\begin{tabular}{cc}
		\parbox{7.7cm}{
			\begin{subfigure}[b]{.67\textwidth}
				\begin{center}
					\includegraphics[width=0.77\textwidth]{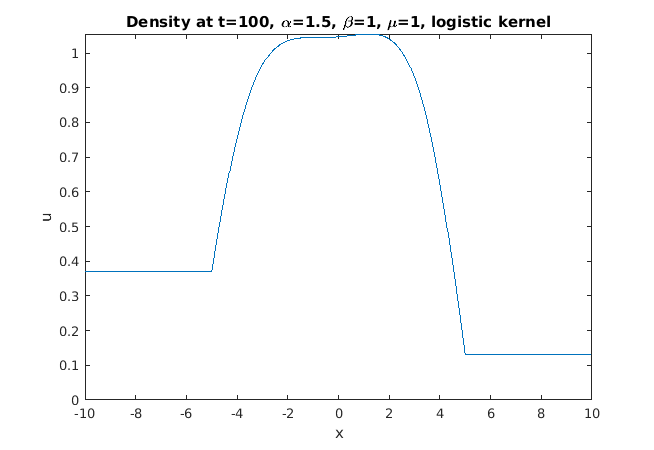}	
					\caption{$\alpha =1.5,\ \mu =1$}\label{fig:4-1}
		\end{center}\end{subfigure}}&\parbox{7.7cm}{\begin{subfigure}[b]{.67\textwidth}
				\begin{center}
					\includegraphics[width=0.77\textwidth]{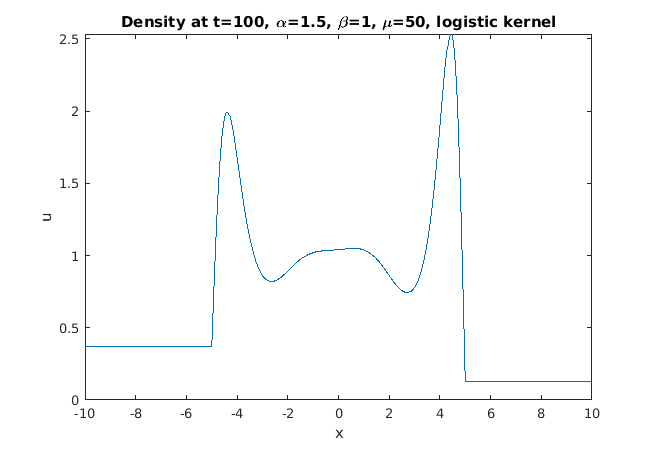}
					\caption{$\alpha =1.5,\ \mu =50$}\label{fig:4-2}	
		\end{center}\end{subfigure}}\\
		\parbox{7.7cm}{
			\begin{subfigure}[b]{.67\textwidth}
				\begin{center}
					\includegraphics[width=0.77\textwidth]{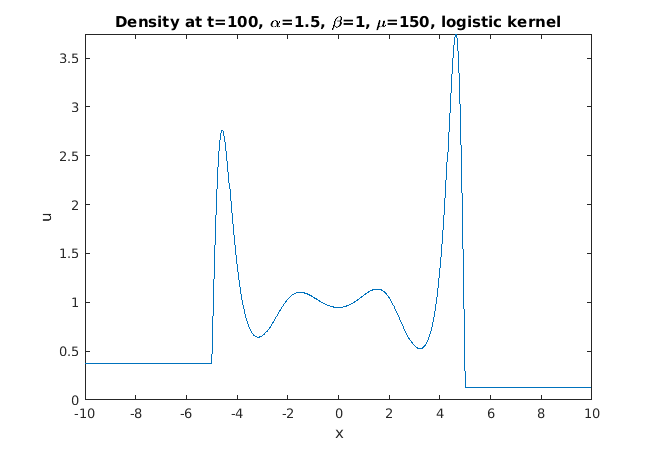}
					\caption{$\alpha =1.5,\ \mu =150$}\label{fig:4-3}	
		\end{center}\end{subfigure}}&
		\parbox{7.7cm}{
			\begin{subfigure}[b]{.67\textwidth}
				\begin{center}
					\includegraphics[width=0.77\textwidth]{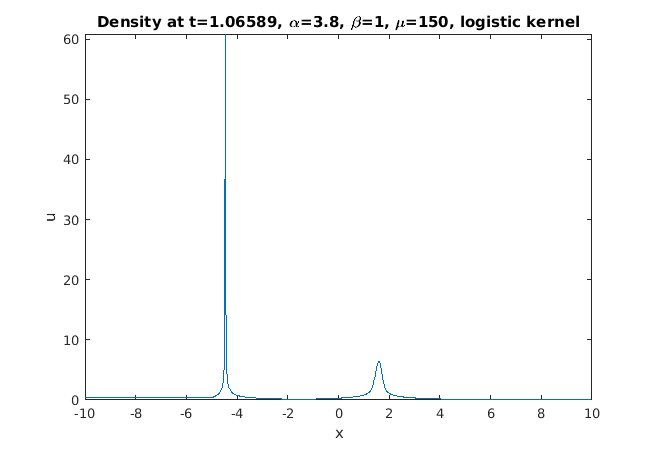}
					\caption{$\alpha =3.8,\ \mu =150$}\label{fig:4-4}
		\end{center}\end{subfigure}}
	\end{tabular}
	\begin{center}
		\caption{Simulation results for \eqref{eq:simulated} with $J$ logistic kernel, $\beta=\kappa=1$.}\label{fig:4}
	\end{center}
\end{figure}

We also tested (still with the initial condition in Subfigure  \ref{fig:3-0}) the situation with small parameters $\beta $ and $\kappa $, which means huge values of the constant steady-state $\kappa ^{-1/\beta}$. According to 
	Theorem \ref{Th:1.1} it should be possible for the solution to stay bounded, although the upper bound might be very large. This is indeed the case; Figure \ref{fig:5} shows simulations done for $\beta =0.1$ and $\kappa =0.2$ when $\alpha =1.1$ (thus $\alpha =\alpha ^*$) and $\mu =150$. The solution oscillates shortly, but quickly stabilizes at $\kappa ^{-1/\beta}\simeq 0.98e+07$. This situation is, however, more prone to blow-up than that for larger $\beta , \kappa $: this happens already for $\alpha =1.173$, thus slightly exceeding $\alpha ^*$, in contrast to the results in Figure \ref{fig:3}, where blow-up first occurred for $\alpha \simeq 4$.
	
	When $\kappa$ becomes very small (thus for a very weak depletion due to intrapopulation concurrence) the solution can infer very large values. Simulations at three different times are shown in Figure \ref{fig:6} for $\mu=48$; the solution exhibits oscillations of huge amplitude around its expected asymptotic limit, ut stays bounded.    
	
	 The choice $\beta =0.01$, $\kappa =0.1$ leads to an enormous upper bound and a longer time needed for the solution to stabilize, but no explosion, as long as $\alpha <\alpha^*$ and for an arbitrary, fixed $\mu  $. % is not exceedingly large\footnote{within four orders of magnitude away from $\kappa$, after which singularities appear, possibly due to the equation becoming too stiff and the algorithm breaking down}.  
	 Hence, there seems to exist a fine and rather complex tuning between the parameters of the problem (whereby for the same $\mu$ too small values of $\kappa$ are more favorable to producing large bounds than too small $\beta$'s) and the choice of $J$ may also play a role, although presumably a less prominent one.\\

\begin{figure}[h!]
	\hspace*{-2cm}	
	\begin{tabular}{ccc}
		\parbox{5.2cm}{
			\begin{subfigure}[b]{.56\textwidth}
				\begin{center}
					\includegraphics[width=0.57\textwidth]{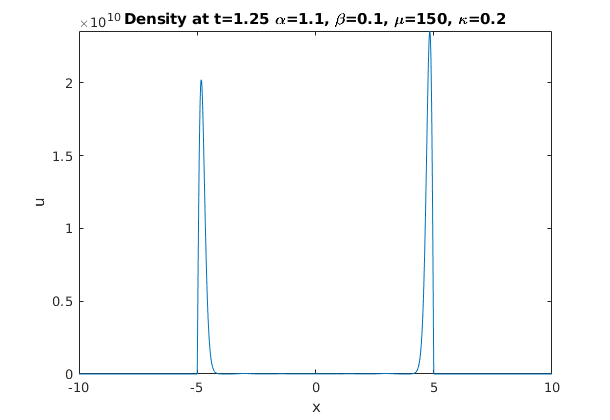}
					\caption{$t=1.25$}\label{fig:5-1}
		\end{center}\end{subfigure}}&\parbox{5.2cm}{
			\begin{subfigure}[b]{.56\textwidth}
				\begin{center}
					\includegraphics[width=0.57\textwidth]{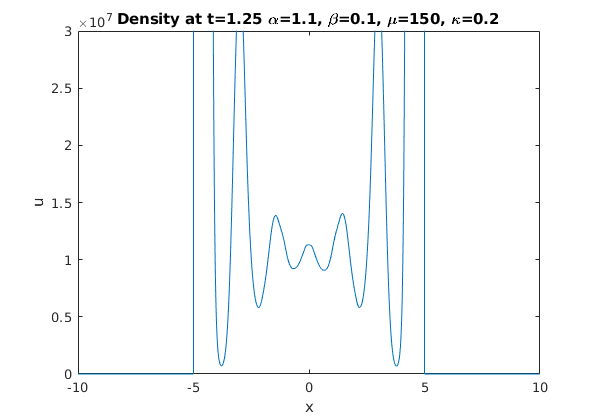}
					\caption{$t=1.25$, zoomed}\label{fig:5-2}	
		\end{center}\end{subfigure}}
		&\parbox{5.2cm}{\begin{subfigure}[b]{.56\textwidth}
				\begin{center}
					\includegraphics[width=0.57\textwidth]{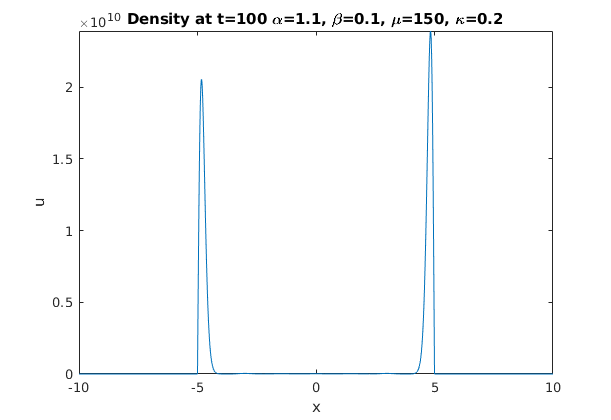}
					\caption{$t=100$}\label{fig:5-3}	
		\end{center}\end{subfigure}}
	\end{tabular}
	\begin{center}
		\caption{Simulation results for \eqref{eq:simulated} with $J$ uniform, $\alpha =1.1,\ \beta=0.1,\ \kappa=0.2,\ \mu =150$ }\label{fig:5}
	\end{center}
\end{figure}

\begin{figure}[h!]
	\hspace*{-2cm}	
	\begin{tabular}{ccc}
		\parbox{5.2cm}{
			\begin{subfigure}[b]{.56\textwidth}
				\begin{center}
					\includegraphics[width=0.57\textwidth]{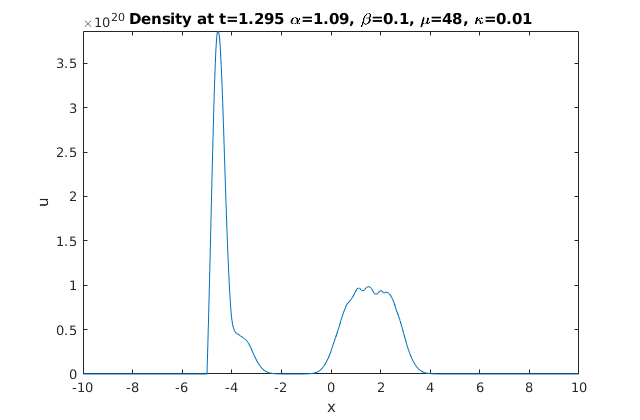}
					\caption{$t=1.295$}\label{fig:5-1}
		\end{center}\end{subfigure}}&\parbox{5.2cm}{
			\begin{subfigure}[b]{.56\textwidth}
				\begin{center}
					\includegraphics[width=0.57\textwidth]{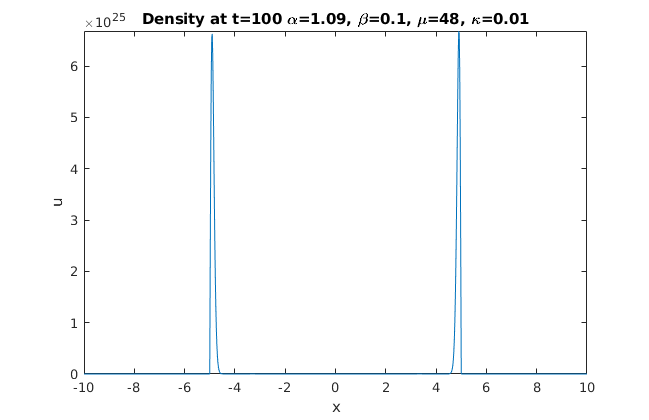}
					\caption{$t=100$}\label{fig:5-2}	
		\end{center}\end{subfigure}}
		&\parbox{5.2cm}{\begin{subfigure}[b]{.56\textwidth}
				\begin{center}
					\includegraphics[width=0.57\textwidth]{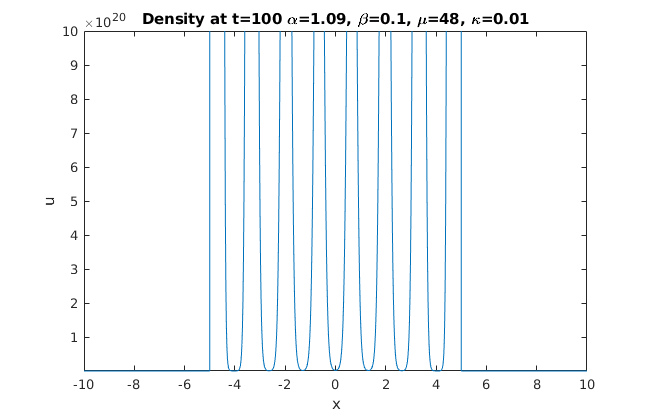}
					\caption{$t=100$, zoomed}\label{fig:5-3}	
		\end{center}\end{subfigure}}
	\end{tabular}
	\begin{center}
		\caption{Simulation results for \eqref{eq:simulated} with $J$ uniform, $\alpha =1.09,\ \beta=0.1,\ \kappa=0.01,\ \mu =48$ }\label{fig:6}
	\end{center}
\end{figure}

\subsection{Discussion}
%\noindent
The simulation-based observations in \ref{sec:GB} and \ref{sec:HTE} suggest that the solution behavior w.r.t. to global boundedness and patterning is influenced not only by the values of $\alpha$, $\beta$, but also by $\mu$, $\kappa$ and the shape of the convolution kernel $J$. Moreover, it seems that the ($\beta$-dependent) bounds established for $\alpha$ in this paper could be non-sharp. %Another observation is that the asymptotic limit in Theorem \ref{Th:1.2} is necessary, but not sufficient for the solution bounds $0\le u\le \kappa^{-1/\beta}$ to hold under assumptions (A), (B). 
It would be interesting to investigate the conditions under which the solution ceases to remain bounded. Concerning the form of the convolution kernel, we expect that there is some $\hat{\alpha}(J,\mu)\geq\alpha^*$, such that for any initial data, the solution still exists and stays bounded for $\alpha ^*<\alpha <\hat{\alpha}(J,\mu)$, whereas for $\alpha \ge \hat{\alpha}(J,\mu)$, there exists initial data such that blow-up occurs.
Whether and when this model can exhibit pattern formation remains unsolved.

%\noindent
We handled in this paper a PDE describing the dynamics of a single species under linear diffusion and nonlocal intrapopulation interactions. In many applications, however, it turns out that other types of diffusion might be more appropriate to characterize the behavior of a certain population. For instance, the so-called myopic diffusion $\nabla \nabla : (\mathbb D(x)u)=\nabla \cdot (\nabla \cdot\mathbb D(x) u+\mathbb D(x)\nabla u)$ has been obtained in connection with the anisotropic spread of brain tumor cells in the surrounding tissue, see e.g. \cite{EHKS,HP} and references therein. This kind of diffusion corresponds to the cells perceiving their surroundings right where they are; the respective PDE is most often obtained either from master equations written for position jumps between the sites of a lattice (where the transition probabilities depend on the information available at the current cell locations) or from velocity jump processes on the mesoscale, with a subsequent, adequate upscaling (usually of the parabolic type). %In a fortcoming paper we plan to address the nonlocalities studied here in the presence of myopic diffusion.
Further types of diffusion lead to quasilinear equations where the diffusion coefficients depend on the solution in a more or less complicated way. The analysis of such equations with nonlocality is itself a nontrivial problem. Yet other interesting issues relate to nonlocal interactions between at least two different populations or between a population of individuals performing a certain type of tactic motion towards or away from some diffusing or nondiffusing signal. For a review on related nonlocal models we refer to \cite{CPSZ} and for more comprehensive reviews in a broader context to e.g., \cite{eftimie,kavall-suzuki}.

\section*{Appendix: deduction of an equation of type \eqref{eq:1-1} from a mesoscopic formulation}

We start with the kinetic transport equation
\begin{equation}\label{eq:KTE1}
p_t+v\cdot \nabla _xp=\lambda \mathcal L[p]+ \tilde \mu \mathcal I[p,p],
\end{equation}
where $p(x,t,v)$ represents the distribution function of individuals being at time $t$ in position $x$ and having velocity $v\in V$. The velocity space $V$ is assumed to be bounded, e.g. of the form $V=[s_1,s_2]\times \mathbb S^{N-1}$, where $s_1, s_2$ denote the minimal, respectively maximal speed of an individual. The right hand side operators describe reorientations of individuals and growth/decay of $p$ due to proliferative/competitive interactions. The coefficients $\lambda , \tilde \mu >0$ represent the turning frequency and the interaction rate, respectively, and are assumed here to be constants. By an appropriate rescaling \footnote{We assume that the turning time, i.e. $\frac{1}{\lambda}$ is $\eps $-small when compared to the characteristic time $\tau $ of the mesoscopic dynamics described by \eqref{eq:KTE1}.
	%This is equivalent to performing a hyperbolic scaling $t\to \eps t$ and $x\to \eps x$.
	Moreover, $\tilde \mu $ is assumed to be much smaller than $\lambda $:
	the individuals have a high preference of changing direction rather that interacting and crowding.} %using a small parameter $\eps$,
we get $\lambda =\frac{1}{\eps}$, $\tilde \mu =\eps$, and the above equation \eqref{eq:KTE1} becomes
\begin{equation}\label{eq:KTE2}
\eps p_t+v\cdot \nabla _xp=\frac{1}{\eps} \mathcal L[p]+ \eps  \mathcal I[p,p].
\end{equation}

We define the integral operators as follows:
\begin{align}
\mathcal L[p](x,t,v)&=\int _V\Big (T(v,v')p(x,t,v')-T(v',v)p(x,t,v)\Big ) dv',\\
\mathcal I[p,p](x,t,v)&=\frac{p^{\alpha }(x,t,v)}{\int _VM^{\alpha }(v)dv}-\frac{\tilde \kappa }{\int _VM^{\alpha +\beta }(v)dv}p^{\alpha }(x,t,v)\int _\Om J(x,x')p^{\beta }(x',t,v)dx'.
\end{align}

Thereby, $\alpha ,\beta \ge 1$ and $\tilde \kappa >0$ are constants, $J(x,x')$ is a function weighting the interactions between (groups of) individuals having the same velocity regime within $\Om \subseteq \R^N$ \footnote{correspondingly normalized if $\Om$ is bounded}. We can think e.g., of collectives of cells which are attached to their neighbors and move together in (roughly) the same direction and having the same (average) speed. Two such collectives align their motion (usually the smaller group to the larger one) and correspondingly adapt their (mean) direction. They have to be sufficiently large to ensure proliferation (cells 'feel well' among a moderate bunch of their own kind), but not too large to compete excessively for space or other resources. 
	Our assumption of the cells sharing the same velocity regime is a simplification, in order not to complicate too much the exposition. We could also think of allowing different velocities, which would require introducing yet another kernel, for transitions from one velocity regime to the other. A convenient choice of such kernel will lead to the same result as in the present setting.

We assume that
%at some position $x$ there can be a whole cluster of mutually interacting individuals
$J$ depends on the distance between the interacting (clusters of) individuals and take here $J(x,x')=J(x-x')$, also requiring $J$ to satisfy conditions \eqref{eq:1-3}. Further, $T(v,v')\ge 0$ is a turning kernel giving the likelihood of an individual having velocity $v'$ to assume the new velocity $v$. The operator $\mathcal L$ and its turning kernel are supposed to satisfy the following

\begin{assumption}\quad
\begin{itemize}
\item $\int _VT(v,v')dv=1$;%\quad  \\%\quad \text{(kernel condition)}
\item  $\int _V\mathcal L[\phi ]dv=0$, for all $\phi $;
\item There exists a bounded velocity distribution $M(v)>0$, not depending on $t, x$,  such that the detailed balance condition $T(v,v')M(v')=T(v',v)M(v)$
holds and \\$\int _VM(v)dv=1,\ \int _VvM(v)dv=0$.
\item There exist $c,C>0$ constants such that $cM(v)\le T(v,v')\le CM(v)$, for all $v, v'\in V$, $x\in \Om $, and $t>0$.
\end{itemize}
\end{assumption}
%\noindent
The following result holds:
\begin{lemma}(see e.g., \cite{BBNS12})
Under the above assumptions the operator $\mathcal L$ has the properties:
\begin{itemize}
	\item $\mathcal L$ is self-adjoint in the weighted space $L^2(V,\frac{dv}{M(v)})$;
	\item For $\psi \in L^2$ there is a unique $\phi \in L^2(V,\frac{dv}{M(v)})$ such that $\mathcal L[\phi ]=\psi $, which satisfies
	$$\int _V\phi (v)dv=0\qquad \text {iff}\qquad \int _V\psi (v)dv=0;$$
	\item $\Ker \mathcal L=<M(v)>$, the vector space spanned by $M(v)$;
	\item There exists a unique function $\theta (v)$ satisfying $\mathcal L[\theta (v)]=vM(v)$.
\end{itemize}
\end{lemma}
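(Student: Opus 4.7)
The four assertions are standard consequences of the detailed balance condition together with the positivity and normalization hypotheses on $T$. The plan is to handle them in the order stated, letting each step feed into the next.

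\emph{Self-adjointness.} I would write the inner product $\langle \mathcal L[\phi],\psi\rangle_{L^2(V,dv/M)}$ as a double integral and split the gain and loss terms. In the gain term $\int\!\!\int T(v,v')\phi(v')\psi(v)\,dv'dv/M(v)$ I apply detailed balance in the form $T(v,v')/M(v)=T(v',v)/M(v')$ and then swap $v\leftrightarrow v'$; this turns it into $\int\!\!\int T(v,v')\phi(v)\psi(v')\,dv'dv/M(v)$, which recombined with the loss term gives $\langle \phi,\mathcal L[\psi]\rangle$.

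\emph{Negative semidefiniteness and kernel.} Using the same rewriting with $\tilde\phi:=\phi/M$, the operator takes the symmetric form $\mathcal L[\phi](v)=M(v)\int_V T(v',v)[\tilde\phi(v')-\tilde\phi(v)]\,dv'$. A one-line symmetrization then yields
\begin{equation*}
\langle \mathcal L[\phi],\phi\rangle_{L^2(V,dv/M)}=-\tfrac12\int_V\!\!\int_V T(v',v)M(v)\bigl[\tilde\phi(v')-\tilde\phi(v)\bigr]^2 dv'dv\le 0.
\end{equation*}
Since $T(v',v)\ge cM(v')>0$, equality forces $\tilde\phi$ to be constant, hence $\phi\in\langle M\rangle$. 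Combined with the direct verification that $\mathcal L[M]=0$ (which is immediate from detailed balance), this gives $\Ker\mathcal L=\langle M\rangle$.

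\emph{Solvability.} I would write $\mathcal L=K-I$, where $K\phi(v):=\int_V T(v,v')\phi(v')\,dv'$ and the identity part comes from $\int_V T(v',v)\,dv'=1$ (which follows by combining $\int T(v,v')dv=1$ with detailed balance). The kernel $T(v,v')$ is bounded on the bounded set $V\times V$, so $K$ is Hilbert--Schmidt on $L^2(V,dv/M)$, hence compact; it is also self-adjoint by the first step. The Fredholm alternative then gives that $\mathcal L[\phi]=\psi$ is solvable iff $\psi\perp\Ker\mathcal L=\langle M\rangle$ in $L^2(V,dv/M)$, i.e.\ iff $\int_V\psi(v)\cdot M(v)\,dv/M(v)=\int_V\psi\,dv=0$. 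Uniqueness is enforced by prescribing $\int_V\phi\,dv=0$ (equivalently, orthogonality to $M$); the equivalence $\int\phi=0\Leftrightarrow\int\psi=0$ claimed in the statement follows from $\int\mathcal L[\phi]\,dv=0$ (checked directly from $\int T(v,v')dv=\int T(v',v)dv'=1$).

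\emph{Existence of $\theta$.} The compatibility condition for $\mathcal L[\theta]=vM(v)$ is $\int_V vM(v)\,dv=0$, which is one of the assumptions on $M$. The previous step then yields a solution, made unique by the normalization $\int_V\theta\,dv=0$. The only mildly delicate point in the whole argument is verifying that $K$ is compact under only the stated bounds $cM\le T\le CM$; the Hilbert--Schmidt estimate relies crucially on the boundedness of $V$, and I would flag this assumption explicitly.
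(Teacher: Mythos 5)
Your proof is correct, and it is essentially the standard argument for this lemma; the paper itself offers no proof at all, simply citing \cite{BBNS12}, so there is nothing to compare against beyond noting that your route (symmetrization via detailed balance, the Dirichlet-form identity for the kernel, and the Fredholm alternative for the compact self-adjoint perturbation $K-I$) is exactly the one used in that literature. Two small touch-ups. First, the identity $\int_V T(v',v)\,dv'=1$ does not follow from $\int_V T(v,v')\,dv=1$ and detailed balance alone (those only give $\int_V T(v',v)\,dv'=M(v)^{-1}\int_V T(v,v')M(v')\,dv'$, which is circular); it is the separately assumed property $\int_V\mathcal L[\phi]\,dv=0$ for all $\phi$, combined with the column normalization, that forces it. Second, the point you flag about compactness of $K$ is resolved not by boundedness of $V$ but by the two-sided bound on $T$: in $L^2(V,\frac{dv}{M})$ the kernel of $K$ is $T(v,v')M(v')$, and its Hilbert--Schmidt norm is $\iint T(v,v')^2\frac{M(v')}{M(v)}\,dv\,dv'\le C^2\iint M(v)M(v')\,dv\,dv'=C^2<\infty$ using $T\le CM$ and $\int_V M\,dv=1$, so no lower bound on $M$ or finiteness of $|V|$ is needed.
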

%\noindent
Specifically, we consider $T(v,v')=M(v)$, which obviously has the required properties. This gives $\theta (v)=-vM(v)$.

%\noindent
Let $u(x,t)=\int _Vp(x,t,v)dv$, with $p$ being a solution of \eqref{eq:KTE2}. We decompose $p(x,t,v)=M(v)u(x,t)+\eps g(x,t,v)$, which gives $\int _Vg(x,t,v)dv=0$ and
\begin{equation}\label{eq:afterepsdiv}
\partial_t(M(v)u)+\eps \partial_tg+\frac{1}{\eps }vM(v)\cdot \nabla _xu+v\cdot \nabla _xg=\frac{1}{\eps }\mathcal L[g]+\mathcal I[p,p].
\end{equation}

Integrating \eqref{eq:afterepsdiv} with respect to $v$ yields
\begin{equation}\label{eq:4.6}
u_t+\nabla _x\cdot \int _Vvg(x,t,v)dv=\int _V\mathcal I[p,p]dv.
\end{equation}
%\noindent

Observe that $\mathcal I[M(v)u+\eps g,M(v)u+\eps g]=\mathcal I[M(v)u,M(v)u]+\mathcal O(\eps)$.

%\noindent
Then considering as in \cite{BelloBello} the orthogonal projection operator onto $\Ker \mathcal L$ we have
\begin{align*}
&P_M(h)(v)=M(v)\int _Vh(v)dv,\qquad h\in L^2(V,\frac{dv}{M(v)})\\
&(I-P_M)(M(v)u)=P_M(g)=0\\
&(I-P_M)(vM(v)\cdot \nabla _xu)=vM(v)\cdot \nabla _xu.
\end{align*}

Apply $I-P_M$ to \eqref{eq:afterepsdiv} to obtain
\begin{equation}
\eps \partial_tg+\frac{1}{\eps }vM(v)\cdot \nabla _xu+(I-P_M)(v\cdot \nabla _xg)=\frac{1}{\eps }\mathcal L[g]+(I-P_M)(\mathcal I[p,p]),
\end{equation}
from which
\begin{equation}
g=\mathcal{L}^{-1}(vM(v)\cdot \nabla _xu)+\mathcal O(\eps).
\end{equation}

Plugging this into \eqref{eq:4.6} leads to
\begin{equation}\label{eq:4.9}
u_t+\int_Vv\cdot \nabla _x(\mathcal{L}^{-1}(vM(v)\cdot \nabla _xu))dv=\int _V\mathcal I[M(v)u,M(v)u]dv+\mathcal O(\eps).
\end{equation}

Now observe that
$$\int_Vv\cdot \nabla _x(\mathcal{L}^{-1}(vM(v)\cdot \nabla _xu))dv=\nabla_x\cdot \left (\int _Vv\otimes \theta (v)dv\cdot \nabla _xu\right),$$
therefore from \eqref{eq:4.9} we formally obtain in the limit $\eps \to 0$ the macroscopic nonlocal PDE
\begin{align*}
u_t-\D\Delta u=u^{\alpha }\left (1-\tilde \kappa J*u^\beta \right ),
%u^{\alpha }\Bigg (\frac{1}{\int _VM^{\alpha }(v)dv}-\frac{1}{\int _VM^{\alpha +\beta }(v)dv}J*u^\beta \Bigg ),
\end{align*}
where $\D=\int _Vv\otimes v\ M(v)dv$. In particular, if we consider the uniform velocity distribution  $M(v)=\frac{1}{|V|}$ and assume that the individuals can have different orientations, but all preserve the same constant speed $s$, so that  $V=s\mathbb S^{N-1}$, then we have $|V|=\omega _0s^{N-1}$, with $\omega _0=|\mathbb S^{N-1}|$, leading to $\D=s^2/N$. A nondimensionalization leads to the PDE having the form \eqref{eq:1-1}.
Theorem \ref{Th:1.1} is proved.
% and the above equation becomes
%\begin{align*}
%u_t-\D\Delta u=|V|^{\alpha -1}u^{\alpha }\Big (1-|V|^\beta J*u^{\beta }\Big ),
%\end{align*}
%where $|V|=\omega _0s^{N-1}$, with $\omega _0=|\mathbb S^{N-1}|$.
%Scaling $U=u|V|$ then leads to the (still nondimensional) equation
%\begin{align*}
%U_t-\D\Delta U=U^{\alpha }(1-J*U^{\beta}).
%\end{align*}
%with $\kappa =\left (\frac{N}{s^2}\right )^{\frac{\beta }{\alpha -1}}$, thereby $s$ denoting the nondimensionalized constant individual speed, and
%$(J_\gamma *w)(x)=\int _{\R^N}J(\gamma x-y)w(y)dy$, with $\gamma =\frac{as^2}{N}$ and $a$ some scaling constant.

%taking place irrespective of the velocities of their counterparts, i.e. with the whole portion of the macroscopic population $u$ available in the spatial domain of interest $\Om \subseteq \R^N$.

\section*{Acknowledgement} The authors thank Aydar Uatay (TU Kaiserslautern) for helping with the implementation of the numerical method. This work is supported by the German Research Foundation DFG, grant CH 955/3-1, and the Research Grant Funds of Minzu University of China.

\end{document}